\renewcommand{\theequation}{\arabic{section}. \arabic{equation}}
\def\<{\langle}
\def\>{\rangle}
\newtheorem{thm}{Theorem}[section]
\newtheorem{cor}[thm]{Corollary}
\newtheorem{prop}[thm]{Proposition}
\newtheorem{ex}[thm]{Example}
\theoremstyle{definition}
\newtheorem{defn}{Definition}[section]
\theoremstyle{remark}
\newtheorem{re}{Remark}[section]
\begin{document}
	\title{\bf Cohomologies of Reynolds Lie algebras with derivations and its applications }
	\author{\bf  , Basdouri Imed, Sadraoui Mohamed Amin}
	\author{{
			\  Basdouri Imed $^{1}$
			\footnote { Corresponding author, E-mail: basdourimed@yahoo. fr}
			\ Sadraoui Mohamed Amin $^{2}$
			\footnote { Corresponding author, E-mail: aminsadrawi@gmail.com}
		}\\
		\\
		{\small 1. University of Gafsa, Faculty of Sciences Gafsa, 2112 Gafsa, Tunisia} \\
		{\small 2. University of Sfax, Faculty of Sciences Sfax, BP
			1171, 3038 Sfax, Tunisia}  
	}
	\date{}
	\maketitle
	\begin{abstract}
		The aim of this paper is to study the cohomology theory of Reynolds Lie algebras equipped with derivations and to explore related applications. We begin by introducing the concept of Reynolds LieDer pairs. Subsequently, we construct the associated cohomology. Finally, we investigate formal deformations, abelian extensions, and extensions of a pair of derivations, all interpreted through the lens of cohomology groups.
	\end{abstract}
	\textbf{Key words:}\ Lie algebras, Lie algebras with derivation, Reynolds operators, cohomology, deformation . \\

	\numberwithin{equation}{section}
	
	\tableofcontents
\section{Introduction}
Rota-Baxter operators were initially introduced in the context of Baxter’s work on fluctuation theory in probability \cite{B2}. The concept was later extensively developed and formalized by Rota \cite{R1}. A substantial body of work has been devoted to various aspects of Rota-Baxter operators across numerous branches of mathematics, including combinatorics \cite{G0} , renormalization in quantum field theory \cite{C0}, multiple zeta values in number theory \cite{G1}, the Yang–Baxter equation \cite{B3}, algebraic operads \cite{A0}, and more. Later, several operators related to Rota-Baxter operators have emerged. Among them is the Reynolds operator, which finds its origin in Reynolds’ work on turbulence in fluid dynamics \cite{R2}. The mathematical formalization of the Reynolds operator was later introduced by Kampé de Fériet \cite{F0}. More recently, Zhang, Gao, and Guo explored the structure and properties of Reynolds operators, providing examples and constructing free Reynolds algebras using techniques such as bracketed words and rooted trees \cite{Z0}. Further developments and studies on Reynolds operators can be found in \cite{D0,H0,R1}. One notable application of Reynolds operators lies in their connection to NS-Lie algebras, just as pre-Lie algebras are related to Rota-Baxter operators \cite{G2,G3}.

A classical approach to study a mathematical structure is to associate to it invariants. Among these, cohomology theories occupy a central position as they enable for example to control deformations or extension problems. In particular, cohomology theories for various types of algebras have been successfully developed and extensively studied \cite{B3,C1,G4,H1,H2}.  The deformation theory of algebraic structures was initiated by the seminal work of Gerstenhaber \cite{G5} on associative algebras, and subsequently extended to Lie algebras by Nijenhuis and Richardson \cite{N0}. Following these foundational contributions, the deformation of algebra morphisms and simultaneous deformations have been extensively investigated in several literatures. \\
Derivations are important tool to study various algebraic structures. Homotopy Lie algebras \cite{V0}, deformation formulas \cite{C2} and differential Galois \cite{M1} theory can all be derived from the study of derivations. Derivations also play a crucial role in control theory and in gauge theories within the framework of quantum field theory \cite{A1,A2}. Recently, the cohomology, extension, and deformation theories of Lie algebras equipped with derivations, referred to as LieDer pairs, have been investigated in \cite{B0,B1,B2,D1,D2,L0,R0,S0,S1,S2,W0,W1}.\\
Motivated by these extended works, we aim to investigate the cohomology theory of Reynolds LieDer pairs and explore their applications.\\
The paper is organized as follows. In section \ref{section 2}, we introduce a notion of Reynolds LieDer pair and its representation, also we study matched pair of Reynolds LieDer pairs. In section \ref{section 3}, we study cohomologies of Reynolds LieDer pairs. In section \ref{section 4}, we investigate the relation between formal deformation and cohomology of Reynolds LieDer par. In section \ref{section 5} we characterize abelian extension of Reynolds LieDer pairs using the second cohomology group and finally section \ref{section 6} is devoted to studying extension of a pair of derivations.

\section{Reynolds LieDer pairs}\label{section 2}
In this section, we recall some basic definitions related to Reynolds Lie algebras and we study its cohomology which is obtained as a byproduct of the Chevalley-Eilenberg cohomology of
the underlying Lie algebra with the cohomology of the Reynolds operator. Then, we introduce a notion of a Reynolds LieDer pair and some related results.\\

Let $\mathbb{L}=\mathrm{(L,[-,-])}$ be a Lie algebra and $\mathrm{(V;\rho)}$ be a representation of it. Denote the space of $\mathrm{n}$-cochains by $\mathrm{C^n_{Lie}(L;V)=Hom(\wedge^nL,V)}$ for $\mathrm{n\geq 0}$ and the coboundary map $\mathrm{\delta_{CE}:C^n_{Lie}(L;V)\rightarrow C^{n+1}_{Lie}(L;V)}$ by 
\begin{eqnarray*}
	\mathrm{(\delta_{CE}f)(x_1,\cdots,x_{n+1})}&=&\mathrm{\displaystyle\sum^{n+1}_{i=1}(-1)^{i+n}\rho(x_i)f(x_1,\cdots,\hat{x}_i,\cdots,x_{n+1})}\\
	&+&\mathrm{\displaystyle\sum_{1\leq i<j\leq n+1}(-1)^{i+j+n+1}f([x_i,x_j],x_1,\cdots,\hat{x}_i,\cdots,\hat{x}_j,\cdots,x_{n+1})}
\end{eqnarray*}
for $\mathrm{f\in C^n_{CE}(L;V)}$ and $\mathrm{x_1,\cdots,x_{n+1}\in L}.$
\begin{defn}
	The cohomology of the Lie algebra $\mathrm{\mathbb{L}}$ with coefficients in $\mathrm{V}$ is the cohomology of the cochain complex $\mathrm{(\oplus_{n\geq 0}C^n_{Lie}(L;V),\delta_{CE})}$. The corresponding $\mathrm{n}$-th cohomology group is denoted by
	\begin{equation*}
		\mathrm{\mathcal{H}^n_{CE}(L;V)=\frac{\mathcal{Z}^n_{CE}(L;V)}{\mathcal{B}^n_{CE}(L;V)},\quad \forall n\geq 0}.
	\end{equation*} 
\end{defn}
Let $\mathbb{L}$ be a Lie algebra. A linear map $\mathrm{R:L\rightarrow L}$ is called a Reynolds operator if it satisfies the following equation 
\begin{equation}\label{Reynolds operator}
	\mathrm{[Rx,Ry]=R([x,Ry]+[x,Ry]-[Rx,Ry])}, \quad \forall x,y \in L.
\end{equation}
Moreover, a Lie algebra $\mathrm{\mathbb{L}=(L,[-,-])}$ equipped with a Reynolds operator $\mathrm{R}$ is called a Reynolds Lie algebra and denoted by $\mathrm{(\mathbb{L},R)}$.\\
While the authors, in \cite{H0}, focused on the cohomology of 
$\mathrm{n}$-Reynolds operators through the framework of the induced $\mathrm{n}$-Reynolds Lie algebras, our contribution consists in extending this work by considering the case $\mathrm{n=2}$ to study the cohomology of Reynolds Lie algebras. In line with the same approach, we now introduce the following theorem
\begin{thm}
	Let $\mathrm{(L,[-,-],R)}$ be a Reynolds Lie algebra, define $\mathrm{[-,-]_R:L\wedge L\rightarrow L}$ by
	\begin{equation}\label{bracket-R}
		\mathrm{[x,y]_R=[x,Ry]+[Rx,y]-[Rx,Ry], \quad \forall x,y \in L.}
	\end{equation}
	Then we have the following:
	\begin{enumerate}
		\item[i)] $\mathrm{[Rx,Ry]=R([x,y]_R)}$,
		\item[ii)] $\mathrm{\mathbb{L}_R=(L,[-,-]_R)}$ is a Lie algebra,
		\item[iii)] $(\mathrm{\mathbb{L}_R,R})$ is a Reynolds Lie algebra.
	\end{enumerate}  
\end{thm}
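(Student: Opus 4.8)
For part (i), I would simply unwind the definitions: the right-hand side $R([x,y]_R)$ is, by (\ref{bracket-R}), nothing but $R([x,Ry]+[Rx,y]-[Rx,Ry])$, which equals $[Rx,Ry]$ by the defining identity (\ref{Reynolds operator}). So (i) is just a reformulation of the Reynolds condition and costs no computation; more importantly, it is the identity $R[x,y]_R=[Rx,Ry]$ that makes the other two parts tractable, so I would establish it first and use it throughout.

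For part (ii), bilinearity of $[-,-]_R$ is immediate from bilinearity of $[-,-]$ and linearity of $R$, and antisymmetry follows by substituting $(y,x)$ into (\ref{bracket-R}) and applying the antisymmetry of the ambient bracket term by term. The real content is the Jacobi identity $\sum_{\mathrm{cyc}}[[x,y]_R,z]_R=0$, and this is where I expect the main obstacle to lie — not conceptually, but organizationally. The plan is to expand $[[x,y]_R,z]_R$ via (\ref{bracket-R}), crucially replacing $R[x,y]_R$ by $[Rx,Ry]$ using part (i); this rewrites each cyclic summand as a sum of double brackets of the original Lie structure in which every argument is either a variable or its $R$-image. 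I would then sort all resulting terms by the total number of $R$'s they carry. The three ``triple-$R$'' terms $[[Rx,Ry],Rz]$, $[[Ry,Rz],Rx]$, $[[Rz,Rx],Ry]$ cancel by the Jacobi identity of $[-,-]$ applied to $(Rx,Ry,Rz)$. The remaining nine terms each carry exactly two $R$'s, and the device that tames the bookkeeping is to group them according to which of $x,y,z$ appears bare: the three terms in which $x$ is bare form precisely the cyclic triple $[[x,Ry],Rz]+[[Ry,Rz],x]+[[Rz,x],Ry]$, vanishing by Jacobi for $[-,-]$ on $(x,Ry,Rz)$, and symmetrically for $y$ bare and for $z$ bare. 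Thus the whole cyclic sum decomposes into four instances of the original Jacobi identity and vanishes.

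For part (iii), I must verify that the same $R$ is again a Reynolds operator for the new bracket, i.e. that $[Rx,Ry]_R=R\big([x,Ry]_R+[Rx,y]_R-[Rx,Ry]_R\big)$. My plan is to expand both sides directly through (\ref{bracket-R}); every term that appears is then a bracket of two elements drawn from $\{Rx,R^2x\}$ and $\{Ry,R^2y\}$, so only $R$ and $R^2$ occur. To match the two sides, I would apply the original Reynolds identity (\ref{Reynolds operator}) three times, with $(x,Ry)$, $(Rx,y)$ and $(Rx,Ry)$ substituted for $(x,y)$: these express $[Rx,R^2y]$, $[R^2x,Ry]$ and $[R^2x,R^2y]$ respectively as $R$ of an explicit combination, and combining them with signs $+,+,-$ and using linearity of $R$ collapses the right-hand side exactly onto the expanded $[Rx,Ry]_R$. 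This last part is a finite, self-contained verification with no genuine obstacle once part (i) and the grouping conventions of part (ii) are in hand.
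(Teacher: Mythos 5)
Your proof is correct, but note that the paper itself contains no argument for this theorem: its entire proof is a citation, asserting that the result follows from Theorem 2.6 of the referenced work on Reynolds $n$-Lie algebras by specializing to $n=2$. Your self-contained verification is therefore a genuinely different route --- you carry out explicitly the computation the paper outsources, which is arguably more useful to a reader. The structure you propose is sound. Part (i) is indeed just a restatement of the Reynolds identity (observe that the paper's equation for the Reynolds operator contains a typo, $[x,Ry]$ appearing twice where $[x,Ry]+[Rx,y]$ is intended; you have silently read it in its intended form, which is the right call). For part (ii), your decomposition works exactly as claimed, with one harmless bookkeeping correction: after replacing $R[x,y]_R$ by $[Rx,Ry]$, the term $[[Rx,Ry],Rz]$ occurs in $[[x,y]_R,z]_R$ with coefficient $-2$ (once from expanding $[[x,y]_R,Rz]$, once from $-[R[x,y]_R,Rz]$), so the triple-$R$ contribution to the cyclic sum is $-2$ times the Jacobi sum on $(Rx,Ry,Rz)$ --- still zero, but six terms rather than three; the remaining nine two-$R$ terms group by bare variable into three Jacobi sums precisely as you describe. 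For part (iii), your three applications of the Reynolds identity at $(x,Ry)$, $(Rx,y)$, $(Rx,Ry)$ with signs $+,+,-$ are exactly part (i) applied to those three pairs, so (iii) can be compressed to one line: $R[Rx,y]_R+R[x,Ry]_R-R[Rx,Ry]_R=[R^2x,Ry]+[Rx,R^2y]-[R^2x,R^2y]=[Rx,Ry]_R$, which vindicates your opening remark that (i) is the identity that makes everything else tractable.
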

\begin{proof}
	The proof is similar to (\cite{H0}, Theorem 2.6) by considering $\mathrm{(n=2)}$.
\end{proof}
\begin{defn}
	A representation of the Reynolds Lie algebra $\mathrm{(\mathbb{L},R)}$ is a triple  $\mathrm{(V;\rho,R_V)}$ such that:
	\begin{enumerate}
		\item [i)] $\mathrm{(V;\rho)}$ is a representation of the Lie algebra $\mathbb{L}$,
		\item [ii)] $\mathrm{R_V:V\rightarrow V}$ is a linear map such that :
		\begin{equation}\label{eqt representation Rey Lie algebra}
			\mathrm{\rho(Rx)R_V(u)=R_V(\rho(Rx)u+\rho(x)R_V(u)-\rho(Rx)R_V(u))}
		\end{equation}
	\end{enumerate}
	We denote it simply by $\mathrm{(\mathcal{V},R_V)=(V;\rho,R_V)}$
\end{defn}
\begin{ex}
	$\mathrm{(L,ad,R)}$ is a representation of the Reynolds Lie algebra $\mathrm{(\mathbb{L},R)}$ and it is called adjoint representation.
\end{ex}
\begin{ex}
	Let $\mathrm{(\mathbb{L},R)}$ be a Reynolds Lie algebra and $\mathrm{(V_i;\rho_i,R_{V_i})}$ be a family of representations of it. Then the triple $(\mathrm{\oplus_{i\in I} V_i,\oplus_{i\in I}\rho_i,\oplus_{i\in I}R_{V_i}})$ is a representation of the Reynolds Lie algebra $(\mathrm{\mathbb{L},R})$.
\end{ex}
\begin{prop}
	If $\mathrm{(\mathcal{V},R_V)}$ is a representation of the Reynolds Lie algebra $\mathrm{(L,[-,-],R)}$, then $\mathrm{L\oplus V}$ is a Reynolds Lie algebra where its structure is given by 
	\begin{eqnarray*}
		\mathrm{[x+u,y+v]_\ltimes}&=&\mathrm{[x,y]+\rho(x)(v)-\rho(y)(u)},\\
		\mathrm{R\oplus R_V(x+u)}&=& \mathrm{Rx+R_V(u)}.
	\end{eqnarray*}
	for all $\mathrm{x,y\in L, u,v \in V}$. The Reynolds Lie algebra $\mathrm{L\oplus V}$ is called a semi-direct product of $\mathrm{L}$ and $\mathrm{V}$ denoted by $\mathrm{L\ltimes V:=(L\oplus V,[-,-]_\ltimes,R\oplus R_V)}$.
\end{prop}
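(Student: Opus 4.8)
The plan is to verify the two defining conditions of a Reynolds Lie algebra for the pair $(L\oplus V,\,R\oplus R_V)$: first that $[-,-]_\ltimes$ is a Lie bracket on $L\oplus V$, and second that $R\oplus R_V$ is a Reynolds operator for it, i.e. that it satisfies \eqref{Reynolds operator} with respect to $[-,-]_\ltimes$.

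For the first condition I would observe that $[-,-]_\ltimes$ is nothing but the standard semidirect product bracket attached to the representation $(V;\rho)$ of the Lie algebra $\mathbb{L}$. Skew-symmetry is immediate from the definition, and the Jacobi identity reduces, after expanding the three cyclic terms, to the Jacobi identity in $L$ together with the representation axiom $\rho([x,y])=\rho(x)\rho(y)-\rho(y)\rho(x)$. I regard this step as routine and would dispatch it quickly.

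The substantive part is the Reynolds identity. Abbreviating $\mathcal{R}=R\oplus R_V$ and setting $X=x+u$, $Y=y+v$, I would expand both sides of
\[ [\mathcal{R}X,\mathcal{R}Y]_\ltimes=\mathcal{R}\big([X,\mathcal{R}Y]_\ltimes+[\mathcal{R}X,Y]_\ltimes-[\mathcal{R}X,\mathcal{R}Y]_\ltimes\big) \]
using the explicit formulas for $[-,-]_\ltimes$ and $\mathcal{R}$, and then compare the $L$-components and $V$-components separately. The $L$-component of the argument of $\mathcal{R}$ on the right collapses to $[x,y]_R$, so after applying $R$ it becomes $R([x,y]_R)=[Rx,Ry]$ by part (i) of the theorem above; this matches the $L$-component $[Rx,Ry]$ of the left-hand side.

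The heart of the argument, and the main obstacle, is the $V$-component. After expansion it equals $R_V$ applied to six terms, which I would split into the three terms carrying $v$ and the three carrying $u$. The key observation is that each triple has exactly the shape appearing inside $R_V$ in the representation condition \eqref{eqt representation Rey Lie algebra}: the $v$-triple $\rho(Rx)(v)+\rho(x)(R_V v)-\rho(Rx)(R_V v)$ collapses under $R_V$ to $\rho(Rx)R_V(v)$, and the $u$-triple collapses analogously to $\rho(Ry)R_V(u)$. This produces $\rho(Rx)(R_V v)-\rho(Ry)(R_V u)$, which is precisely the $V$-component of $[\mathcal{R}X,\mathcal{R}Y]_\ltimes$. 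The only delicate point is the careful bookkeeping of the six terms and their signs so that they regroup into two instances of \eqref{eqt representation Rey Lie algebra}; once this is done, both components agree and the verification is complete.
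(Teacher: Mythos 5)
Your proof is correct and is essentially the paper's own argument: both take the semidirect-product Lie structure on $L\oplus V$ as known and verify the Reynolds identity for $R\oplus R_V$ using exactly two ingredients, namely the Reynolds identity on $L$ (equivalently $R([x,y]_R)=[Rx,Ry]$) and two applications of the representation condition \eqref{eqt representation Rey Lie algebra}, one for the $v$-terms and one for the $u$-terms. The only difference is cosmetic---the paper starts from $[R\oplus R_V(x+u),R\oplus R_V(y+v)]_\ltimes$ and expands it via \eqref{eqt representation Rey Lie algebra} until the right-hand side emerges, whereas you expand the right-hand side and collapse it, so the two computations are the same one read in opposite directions.
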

\begin{proof}
	Since $\mathrm{(L\oplus V,[-,-]_\ltimes)}$ is a Lie algebra, all we need to prove is that $\mathrm{R\oplus R_V}$ is a Reynolds operator. Let $\mathrm{x,y\in L, u,v \in V}$.
	\begin{eqnarray*}
		\mathrm{[R\oplus R_V(x+u),R\oplus R_V(y+v)]_\ltimes}&=&\mathrm{[Rx,Ry]+\rho(Rx)R_V(v)-\rho(Ry)R_V(u)}\\
		&=&\mathrm{[Rx,Ry]+R_V(\rho(Rx)v+\rho(x)R_V(v)-\rho(Rx)R_V(v))}\\
		&-&\mathrm{R_V(\rho(Ry)u+\rho(y)R_V(u)-\rho(Ry)R_V(u))}\\
		&=&\mathrm{R[Rx,y]+R_V(\rho(Rx)(v)-\rho(y)R_V(u))}\\
		&+&\mathrm{R[x,Ry]+R_V(\rho(x)R_v(v)-\rho(Ry)u)}\\
		&-&\mathrm{R[Rx,Ry]+R_V(\rho(Rx)R_V(v)-\rho(Ry)R_V(u))}\\
		&=&\mathrm{R\oplus R_V\Big([R\oplus R_V(x+u),y+v]_\ltimes}\\
		&+&\mathrm{[x+u,R\oplus R_V(y+v)]_\ltimes}
		-\mathrm{[R\oplus R_V(x+u),R\oplus R_V(y+v)]_\ltimes\Big)}.
	\end{eqnarray*}
	This complete the proof.
\end{proof}
The following proposition presents an induced representation of the induced Reynolds Lie algebra, derived directly from \cite{H0} by setting $\mathrm{(n=2)}$. This result will be instrumental in our subsequent investigation of the Reynolds Lie algebra cohomology.
\begin{prop}
	Let $\mathrm{(V;\rho,R_V)}$ be a representation of the Reynolds Lie algebra $\mathrm{(L,[-,-],R)}$. Define the linear map $\mathrm{\rho_R:L\rightarrow gl(V)}$ by 
	\begin{equation}
		\mathrm{\rho_R(x)u:=\rho(Rx)u+R_V(\rho(Rx)u-\rho(x)u),\quad \forall x\in L, u\in V.}
	\end{equation}
	Then $(V;\rho_R)$ is a representation of the Lie algebra $\mathrm{\mathbb{L}_R}$. Moreover $\mathrm{(V;\rho_R,R_V)}$ is a representation of the Reynolds Lie algebra $\mathrm{(\mathbb{L},R)}$.
\end{prop}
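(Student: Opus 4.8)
The plan is to deduce both assertions from the semidirect product of the preceding Proposition together with the Theorem, avoiding a brute-force check of the defining identities. By that Proposition, $L \ltimes V = (L \oplus V, [-,-]_\ltimes, R \oplus R_V)$ is a Reynolds Lie algebra. Applying the Theorem to it, the induced bracket $[-,-]_{R \oplus R_V}$ turns $L \oplus V$ into a Lie algebra $(L \ltimes V)_{R \oplus R_V}$, and the pair $((L \ltimes V)_{R \oplus R_V}, R \oplus R_V)$ is again a Reynolds Lie algebra. The proof then reduces to identifying this induced structure with a semidirect product built from $\rho_R$.

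First I would compute $[x+u, y+v]_{R \oplus R_V}$ directly from $[a,b]_{R \oplus R_V} = [a, (R \oplus R_V)b]_\ltimes + [(R \oplus R_V)a, b]_\ltimes - [(R \oplus R_V)a, (R \oplus R_V)b]_\ltimes$. The $L$-component collapses to $[x,Ry] + [Rx,y] - [Rx,Ry] = [x,y]_R$, the bracket of $\mathbb{L}_R$. The $V$-component separates into a part depending only on $v$ and a part depending only on $u$; the crux is to recognise the $v$-part as $\rho_R(x)v$ and the $u$-part as $-\rho_R(y)u$. This is exactly where the representation condition \eqref{eqt representation Rey Lie algebra} for $(V;\rho,R_V)$ is needed: it is the identity that rewrites the raw expression $\rho(Rx)v + \rho(x)R_V v - \rho(Rx)R_V v$ into the form prescribed by the definition of $\rho_R$. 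Once this is done, $(L \ltimes V)_{R \oplus R_V}$ is exhibited as the semidirect product $\mathbb{L}_R \ltimes V$ with action $\rho_R$, and reading off the mixed $L$-$V$ brackets of a semidirect Lie algebra shows immediately that $\rho_R$ is a representation of $\mathbb{L}_R$, which is the first assertion.

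For the second assertion I would invoke the last part of the Theorem: $R \oplus R_V$ is a Reynolds operator on the induced Lie algebra, and that algebra has just been identified with $\mathbb{L}_R \ltimes V$ equipped with the diagonal operator $R \oplus R_V$. Running the bookkeeping of the preceding Proposition in reverse, the Reynolds relation for $R \oplus R_V$ on this semidirect product is equivalent to the two conditions defining a representation of a Reynolds Lie algebra: that $(V;\rho_R)$ be a module and that the compatibility \eqref{eqt representation Rey Lie algebra} hold with $\rho_R$ in place of $\rho$. Isolating the component linear in $V$ yields precisely the required identity for $(V;\rho_R,R_V)$.

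The step I expect to be the main obstacle is the $V$-component bookkeeping: tracking which of the six mixed terms carry $u$ and which carry $v$, and applying \eqref{eqt representation Rey Lie algebra} in the correct direction so that they collapse to $\rho_R(x)v - \rho_R(y)u$. Should the semidirect identification prove delicate, the fallback is the direct route, verifying $\rho_R([x,y]_R) = \rho_R(x)\rho_R(y) - \rho_R(y)\rho_R(x)$ and then the Reynolds compatibility; there the only inputs are \eqref{eqt representation Rey Lie algebra}, the relation $R[x,y]_R = [Rx,Ry]$ from the Theorem, and the fact that $\rho$ is already a Lie algebra module, but the computation is longer and the Reynolds-compatibility step is the delicate one.
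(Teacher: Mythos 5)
Your primary route has a genuine gap at exactly the step you yourself call the crux. Computing the induced bracket of the Theorem on the semidirect product $(L\oplus V,[-,-]_\ltimes,R\oplus R_V)$, the $V$-component comes out as $\tilde{\rho}(x)v-\tilde{\rho}(y)u$ with
\begin{equation*}
\tilde{\rho}(x)v=\rho(Rx)v+\rho(x)R_Vv-\rho(Rx)R_Vv,
\end{equation*}
and you assert that \eqref{eqt representation Rey Lie algebra} rewrites $\tilde{\rho}(x)v$ into $\rho_R(x)v=\rho(Rx)v+R_V\big(\rho(Rx)v-\rho(x)v\big)$. It cannot: equation \eqref{eqt representation Rey Lie algebra} says precisely $R_V\,\tilde{\rho}(x)v=\rho(Rx)R_Vv$, i.e.\ it evaluates $R_V$ \emph{applied to} $\tilde{\rho}(x)v$; it is not a rewriting of $\tilde{\rho}(x)v$ itself, and $\tilde{\rho}\neq\rho_R$ in general. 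Concretely, take $[e_1,e_2]=e_1$, $d(e_1)=e_1$, $d(e_2)=0$, and $R=(\mathrm{id}+d)^{-1}$, so $Re_1=\tfrac{1}{2}e_1$, $Re_2=e_2$; this is a Reynolds operator, and for the adjoint representation ($\rho=\mathrm{ad}$, $R_V=R$, for which \eqref{eqt representation Rey Lie algebra} is exactly the Reynolds identity \eqref{Reynolds operator}) one finds $\tilde{\rho}(e_1)e_2=[e_1,e_2]_R=e_1$ while $\rho_R(e_1)e_2=\tfrac{1}{2}e_1+R(-\tfrac{1}{2}e_1)=\tfrac{1}{4}e_1$. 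So your semidirect argument proves that $\tilde{\rho}$ (the action that restricts to $x\mapsto[x,-]_R$ in the adjoint case) is a representation of $\mathbb{L}_R$ --- a correct but \emph{different} statement --- and the same substitution error undermines your second paragraph, since the Reynolds relation for $R\oplus R_V$ on the identified semidirect product yields the compatibility for $\tilde{\rho}$, not for $\rho_R$.

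What does work is the fallback you mention but do not execute, and its delicate point is not where you place it. From \eqref{eqt representation Rey Lie algebra} and the definition of $\rho_R$ one first derives the operator identity $\rho_R(x)\circ R_V=R_V\circ\rho(Rx)$; with it, together with $R([x,y]_R)=[Rx,Ry]$ and the fact that $\rho$ is a representation of $\mathbb{L}$, both $\rho_R([x,y]_R)v$ and $[\rho_R(x),\rho_R(y)]v$ reduce to the same expression $\rho([Rx,Ry])v+2R_V\rho([Rx,Ry])v-R_V\rho([x,Ry])v-R_V\rho([Rx,y])v$, which proves the first claim, and the same operator identity yields the compatibility \eqref{eqt representation Rey Lie algebra} with $\rho_R$ in place of $\rho$, which proves the second. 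Note finally that the paper offers no proof to compare against: it refers the proposition to the $n=2$ case of \cite{H0}. As proposed, your argument does not establish the proposition as stated; only the unexecuted direct route closes it.
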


Let $\mathrm{\delta_R:C^n_{Lie}(L_R;V)\rightarrow C^{n+1}_{Lie}(L_R;V)}$ be the corresponding coboundary operator of the Lie algebra $\mathrm{(L,[-,-]_R)}$ with coefficients in the representation $\mathrm{(V;\rho_R)}$.
\begin{eqnarray*}
	\mathrm{(\delta_Rf)(x_1,\cdots,x_{n+1})}&=&\mathrm{\displaystyle\sum^n_{i=1}(-1)^{i+n}\rho_R(x_i)f(x_1,\cdots,\hat{x}_i,\cdots,x_{n+1})}\\
	&+&\mathrm{\displaystyle\sum_{1\leq i<j\leq n+1}(-1)^{i+j+n+1}f([x_i,x_j]_R,x_1,\cdots,\hat{x}_i,\cdots,\hat{x}_j,\cdots,x_{n+1})},
\end{eqnarray*}
with the above results, $\mathrm{\{\oplus_{n\geq0} C^n_{Lie}(L_R,V),\delta_R\}}$ is a cochain complex.
\begin{defn}
	The cohomology of the cochain complex $\mathrm{\{\oplus_{n\geq0} C^n_{Lie}(L_R,V),\delta_R\}}$ is defined to be the cohomology of the Reynolds operator $\mathrm{R}$.
\end{defn}
Denote th set of $\mathrm{n}$-cocycles by $\mathrm{\mathcal{Z}^n_{Lie}(L_R;V)}$, the set of $\mathrm{n}$-coboundaries by $\mathrm{\mathcal{B}^n_{Lie}(L_R;V)}$ and the $\mathrm{n}$-th cohomology group for the Reynolds operator $\mathrm{R}$ by 
\begin{equation*}
	\mathrm{\mathcal{H}^n_{Lie}(L_R;V)}=\frac{\mathrm{\mathcal{Z}^n_{Lie}(L_R;V)}}{\mathrm{\mathcal{B}^n_{Lie}(L_R;V)}};\quad \mathrm{n\geq 0}.
\end{equation*}

\begin{prop}
	The collection of maps $\mathrm{\{\varphi:C^n_{Lie}(L,V)\rightarrow C^n_{Lie}(L_R,V)\}}$ defined by 
	$$\left\{ 
	\begin{array}{ll}
		&\mathrm{(\varphi f)(x_1,\cdots,x_n)=f(Rx_1,\cdots,Rx_n)-R_V\displaystyle\sum^n_{i=1}f(Rx_1,\cdots,x_i,\cdots,Rx_n)+(n-1)R_Vf(Rx_1,\cdots,Rx_n)},~~ \mathrm{\forall n\neq 0}, \\
		&\mathrm{\varphi}=\mathrm{Id_V},~~ \mathrm{\forall n=0}.
	\end{array}
	\right.$$ 
	is a morphism of cochain complexes from $\mathrm{\{\oplus_{n\geq0} C^n_{Lie}(L,V),\delta_{CE}\}}$ to $\mathrm{\{\oplus_{n\geq0} C^n_{Lie}(L_R,V),\delta_R\}}$,i.e. 
	\begin{equation}\label{commutative operator Rey Lie algebra}
		\delta_R\circ \varphi=\varphi \circ \delta_{CE}.
	\end{equation}
\end{prop}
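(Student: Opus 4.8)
The plan is to prove the chain-map relation $\delta_R\circ\varphi=\varphi\circ\delta_{CE}$ by a direct expansion of both composites on an arbitrary tuple $x_1,\dots,x_{n+1}\in L$, starting from $f\in C^n_{Lie}(L,V)$. The organizing principle is part~(i) of the first theorem, namely $[Rx,Ry]=R([x,y]_R)$, which says exactly that $R\colon\mathbb{L}_R\to\mathbb{L}$ is a homomorphism of Lie algebras. Thus the bare pullback $f\mapsto f(R-,\dots,R-)$ is already compatible with the brackets, and the remaining $R_V$-terms in $\varphi$ are precisely the corrections that reconcile the source coefficients $(V;\rho)$ with the target coefficients $(V;\rho_R)$. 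It is also convenient to record the equivalent form $\varphi f(x_1,\dots,x_n)=(\mathrm{id}_V-R_V)f(Rx_1,\dots,Rx_n)+R_V\sum_{i=1}^n\big(f(Rx_1,\dots,Rx_n)-f(Rx_1,\dots,x_i,\dots,Rx_n)\big)$, which isolates the slot-by-slot difference $Rx_i$ versus $x_i$ that matches the structure $\rho(Rx)-\rho(x)$ appearing in $\rho_R$.

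First I would expand $\varphi(\delta_{CE}f)$: applying the definition of $\varphi$ at level $n+1$ to the cochain $\delta_{CE}f$ and then inserting the Chevalley--Eilenberg formula produces a sum that splits into \emph{action terms}, those carrying a factor $\rho(-)$, and \emph{bracket terms}, those of the form $f([-,-],\dots)$, each further decorated by $R$ on some arguments and possibly by $R_V$ on the output. Symmetrically, I would expand $\delta_R(\varphi f)$ by substituting $\varphi f$ into the $\delta_R$-formula, in which $\rho$ is replaced by $\rho_R$ and $[-,-]$ by $[-,-]_R$, and in which $\varphi f$ is then evaluated on the resulting arguments. The objective is to match the two expansions summand by summand.

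The bracket terms are reconciled by the Reynolds identity $[Rx_i,Rx_j]=R([x_i,x_j]_R)$ together with the definition \eqref{bracket-R}, $[x_i,x_j]_R=[x_i,Rx_j]+[Rx_i,x_j]-[Rx_i,Rx_j]$: on the $\varphi\circ\delta_{CE}$ side every bracket slot has the form $[Rx_i,Rx_j]$, which collapses to $R([x_i,x_j]_R)$, and this is exactly the argument generated on the $\delta_R\circ\varphi$ side once $R$ is applied to the $[-,-]_R$-slot produced by $\delta_R$ and fed into the first term of $\varphi$. The action terms are reconciled by the representation compatibility \eqref{eqt representation Rey Lie algebra}, i.e. the displayed identity for $\rho(Rx)R_V$, together with $\rho_R(x)u=\rho(Rx)u+R_V(\rho(Rx)u-\rho(x)u)$: unwinding $\rho_R(x_i)(\varphi f)(\dots)$ and comparing with the $\rho(Rx_i)$- and $\rho(x_i)$-terms coming from $\varphi(\delta_{CE}f)$, the compatibility relation is precisely what converts the appearing expressions $\rho(Rx_i)R_V(\cdots)$ into the combination demanded by $\rho_R$.

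The main obstacle is combinatorial bookkeeping rather than any isolated deep step. One must track, for each summand, which arguments carry $R$ and whether the output carries $R_V$, and verify that the signs $(-1)^{i+n}$ and $(-1)^{i+j+n+1}$ together with the numerical coefficient $(n-1)$ in the definition of $\varphi$ conspire correctly; the delicate summands are the partially deformed ones, in which a single argument $x_i$ is left undeformed, since these receive contributions simultaneously from the middle ($R_V$-weighted) term of $\varphi$ applied to $\delta_{CE}f$ and from the $\rho_R$-action and $[-,-]_R$-bracket on the target side. I would organize the matching by grouping summands according to the number of undeformed arguments, check the low-degree cases ($n=0,1$) directly against the convention $\varphi=\mathrm{Id}_V$ in degree $0$, and then confirm that the two groupings coincide; the identity $\delta_R\circ\varphi=\varphi\circ\delta_{CE}$, and hence the induced morphism on cohomology, follows.
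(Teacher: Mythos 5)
Your proposal is correct and follows essentially the same route as the paper: a direct expansion of both composites $\delta_R\circ\varphi$ and $\varphi\circ\delta_{CE}$ on $x_1,\dots,x_{n+1}$, with the bracket summands reconciled via $[Rx_i,Rx_j]=R([x_i,x_j]_R)$ and the action summands reconciled via the definition of $\rho_R$ together with the compatibility condition \eqref{eqt representation Rey Lie algebra}, followed by termwise cancellation. The paper's proof is exactly this computation (it expands $\rho_R(x_i)(\varphi f)$, $(\varphi f)([x_i,x_j]_R,\dots)$, and the $\delta_{CE}$-terms separately and then invokes \eqref{eqt representation Rey Lie algebra}), so no further comparison is needed.
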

\begin{proof}
	Let $\mathrm{f\in C^n_{Lie}(L;V)}$ and $\mathrm{x_1,\cdots,x_{n+1} \in L}$.
	\begin{align*}
	\mathrm{(\delta_R \circ \varphi-\varphi\circ \delta_{CE})f(x_1,\cdots,x_{n+1})}&=\mathrm{\delta_R(\varphi f)(x_1,\cdots,x_{n+1})-\varphi(\delta_{CE}f)(x_1,\cdots,x_{n+1})}\\
	&\mathrm{=\displaystyle\sum_{i=1}^{n+1}(-1)^{i+n}\rho_R(x_i)(\varphi f)(x_1,\cdots,\hat{x}_i,\cdots,x_{n+1})}\\
	&\mathrm{+\displaystyle\sum_{1\leq i<j\leq n+1}(-1)^{i+j+n+1}(\varphi f)([x_i,x_j]_R,x_1,\cdots,\hat{x}_i,\cdots,\hat{x}_j,\cdots,x_{n+1})}\\
	&\mathrm{-(\delta_{CE}f)(Rx_1,\cdots,Rx_{n+1})+R_V \displaystyle\sum_{k=1}^{n}(\delta_{CE}f)(Rx_1,\cdots,x_k,\cdots,Rx_{n+1})}\\
		&-\mathrm{(n-1)R_V(\delta_{CE}f)(Rx_1,\cdots,Rx_{n+1})}
\end{align*}
we have 
\begin{align*}
	&\mathrm{\rho_R(x_i)(\varphi f)(x_1,\cdots,\hat{x}_i,\cdots,x_{n+1})}=\\
	&=\mathrm{\rho(Rx_i)f(Rx_1,\cdots,\hat{x}_i,\cdots,Rx_{n+1})-\rho(Rx_i)R_V\displaystyle\sum_{k=1}^nf(Rx_1,\cdots,\hat{x}_i,\cdots,x_k,\cdots,Rx_{n+1})}\\
	&+\mathrm{(n-1)\rho(Rx_i)R_Vf(Rx_1,\cdots,\hat{x}_i,\cdots,Rx_{n+1})
	+R_V\rho(Rx_i)f(Rx_1,\cdots,\hat{x}_i,\cdots,Rx_{n+1})}\\
	&\mathrm{-R_V\rho(Rx_i)R_V\displaystyle\sum_{k=1}^nf(Rx_1,\cdots,\hat{x}_i,x_k,\cdots,Rx_{n+1})+(n-1)R_V\rho(Rx_i)R_Vf(Rx_1,\cdots,\hat{x}_i,\cdots,Rx_{n+1})}\\
	&\mathrm{-R_V\rho(x_i)f(Rx_1,\cdots,\hat{x}_i,\cdots,Rx_{n+1})+R_V\rho(x_i)R_V\displaystyle\sum_{k=1}^nf(Rx_1,\cdots,\hat{x}_i,\cdots,x_k,\cdots,Rx_{n+1})}\\
	&\mathrm{-(n-1)R_V\rho(x_i)R_Vf(Rx_1,\cdots,\hat{x}_i,\cdots,Rx_{n+1})},
\end{align*}
	and 
	\begin{align*}
		&\mathrm{(\varphi f)([x_i,x_j]_R,x_1,\cdots,\hat{x}_i,\cdots,\hat{x}_j,\cdots,x_{n+1})}=\\
		&=f([Rx_i,Rx_j],Rx_1,\cdots,\hat{x}_i,\cdots,\hat{x}_j,\cdots,Rx_{n+1})-R_V\displaystyle\sum_{k=1}^nf([Rx_i,Rx_j],Rx_1,\cdots,\hat{x}_i,\cdots,\hat{x}_j,\cdots,x_k,\cdots,Rx_{n+1})\\
		&\mathrm{+(n-1)R_Vf([Rx_i,Rx_j],Rx_1,\cdots,\hat{x}_i,\cdots,\hat{x}_j,\cdots,Rx_{n+1})},
	\end{align*}
	also
	\begin{align*}
		&\mathrm{-(\delta_{CE}f)(Rx_1,\cdots,Rx_{n+1})}=\\
		&=\mathrm{-\displaystyle\sum_{k=1}^n\rho(Rx_i)f(Rx_1,\cdots,\hat{Rx_i},\cdots,Rx_{n+1})-\displaystyle\sum_{1\leq i<j\leq n+1}(-1)^{i+j+n+1}f([Rx_i,Rx_j],Rx_1,\cdots,\hat{Rx_i},\cdots,\hat{Rx_j},\cdots,Rx_{n+1})},
	\end{align*}
	and
	\begin{align*}
		&\mathrm{R_V\displaystyle\sum_{k=1}^n(\delta_{CE}f)(Rx_1,\cdots,Rx_{n+1})}=\\
		&=\mathrm{R_V\displaystyle\sum_{k=1}^n\displaystyle\sum_{i=1}^n\rho(Rx_i)f(Rx_1,\cdots,\hat{Rx_i},\cdots,x_k,\cdots,Rx_{n+1})}\\
		&\mathrm{+
		R_V\displaystyle\sum_{k=1}^n\displaystyle\sum_{1\leq i<j\leq n+1} (-1)^{i+j+n+1}f([Rx_i,Rx_j],Rx_1,\cdots,\hat{Rx_i},\cdots,\hat{Rx_j},\cdots,Rx_{n+1})},
	\end{align*}
	and 
	\begin{align*}
		&\mathrm{-R_V\displaystyle\sum_{k=1}(\delta_{CE}f)(Rx_1,\cdots,Rx_{n+1})}=\\
		&=\mathrm{-R_V\displaystyle\sum_{k=1}^n\displaystyle\sum_{i=1}^n\rho(Rx_i)f(Rx_1,\cdots,\hat{Rx_i},\cdots,x_k,\cdots,Rx_{n+1})}\\
		&-
		\mathrm{R_V\displaystyle\sum_{k=1}^n\displaystyle\sum_{1\leq i<j\leq n+1} (-1)^{i+j+n+1}f([Rx_i,Rx_j],Rx_1,\cdots,\hat{Rx_i},\cdots,\hat{Rx_j},\cdots,Rx_{n+1})},
	\end{align*}
	now using equation $\eqref{eqt representation Rey Lie algebra}$ we obtain the result that 
	\begin{equation*}
		\mathrm{(\delta_R\circ \varphi-\varphi\circ\delta_{CE})f=0}.
	\end{equation*}
\end{proof}
Now we define cohomology of Reynolds Lie algebra. Let $\mathrm{(\mathbb{L},R)}$ be a Reynolds Lie algebra and consider $\mathrm{(\mathcal{V},R_V)}$ a representation of it. For each $\mathrm{n\geq 0}$, we define an abelian group $\mathrm{C^n_{R}(L;V)}$ by  
\begin{equation*}
	\mathrm{C^n_{R}(L;V):=C^n_{Lie}(L;V) \oplus C^{n-1}_{Lie}(L_R;V)=Hom(\wedge^nL,V) \oplus Hom(\wedge^{n-1}L,V)},
\end{equation*}
and a map $\mathrm{D_R:C^n_{R}(L;V)\rightarrow C^{n+1}_{R}(L;V)}$ by 
\begin{equation}\label{coboundary map of Rey Lie algera}
	\mathrm{D_R(f,g)=\Big(\delta_{CE}f,-\delta_Rg-\varphi f \Big),\quad \forall (f,g)\in C^n_{R}(L;V)}
\end{equation}
Let $\mathrm{(f,g)\in C^n_{R}(L;V)}$, then
\begin{eqnarray*}
	\mathrm{D_R\circ D_R(f,g)}&=&\mathrm{D_R(\delta_{CE},-\delta_R-\varphi f)}\\
	&=&\mathrm{(\delta^2_{CE}f,-\delta_R(\delta_Rg-\varphi f)-\varphi \circ \delta_{CE}f)}\\
	&=&\mathrm{(0,-\delta^2_Rg+\delta_R \circ \varphi f-\varphi \circ \delta_{CE}f)}\\
	&\overset{\eqref{commutative operator Rey Lie algebra}}{=}&\mathrm{(0,0)}.
\end{eqnarray*} 
This shows that $\mathrm{\{\oplus_{n\geq0}C^n_R(L;V),D_R\}}$ is a cochain complex. Let $\mathrm{\mathcal{Z}_R^n(L;V)}$ and $\mathrm{\mathcal{B}_R^n(L;V)}$ denote the space of $\mathrm{n}$-cocycles and $\mathrm{n}$-coboundaries respectively. Then we have 
\begin{equation*}
	\mathrm{\mathcal{H}^n_R(L;V)}=\frac{\mathrm{\mathcal{Z}^n_R(L;V)}}{\mathrm{\mathcal{B}^n_R(L;V)}};\quad \mathrm{n\geq 0}.
\end{equation*}
are called the cohomology of the Reynolds Lie algebra $\mathrm{(\mathbb{L},R)}$ with coefficients in the representation $\mathrm{(V;\rho,R_V)}$.\\
\space
Inspired by the definition of LieDer pairs \cite{R0}, we introduce a notion of Reynolds LieDer pairs.
\begin{defn}
	A Reynolds LieDer pair consists of a Reynolds Lie algebra $\mathrm{(\mathbb{L},R)}$ together with a derivation $\mathrm{d:L\rightarrow L}$ such that 
	\begin{equation}\label{definition of ReyLieDer}
		\mathrm{R\circ d=d\circ R},
	\end{equation}
	denoted by $\mathrm{(\mathbb{L},R,d)}$.
\end{defn}
\begin{ex}
	Let $\mathrm{\{e_1,e_2\}}$ be a basis of a $2$-dimensional vector space $\mathrm{L}$ over $\mathrm{R}$. Given a Lie structure $\mathrm{[e_1,e_2]=e_1}$, then the quadruple $(\mathrm{\mathbb{L},R,d})$ is a Reynolds LieDer pair with 
	\begin{align*}
	\mathrm{d=\begin{pmatrix}
			a & b \\
			0 & a
		\end{pmatrix}\quad \text{ and   } \quad	R=\begin{pmatrix}
			c & -c \\
			0 & 0
	\end{pmatrix}}
\end{align*}
\end{ex}
\begin{ex}
	Let $\mathrm{R:L\rightarrow L}$ be a Reynolds operator on the Lie algebra $\mathrm{(L,[-,-])}$. Suppose that $\mathrm{R}$ is invertible then the quadruple $\mathrm{(\mathbb{L},R,R^{-1}-id)}$ is a Reynolds LieDer pair.
\end{ex}
\begin{defn}
	Let $\mathrm{(\mathbb{L},R,d)}$ be a Reynolds LieDer pair. A representation of it is a triple $(\mathrm{\mathcal{V},R_V,d_V})$ where $(\mathrm{\mathcal{V},R_V})$ is a representation of the Reynolds Lie algebra $(\mathrm{(\mathbb{L},R)}$ and $\mathrm{d_V:V\rightarrow V}$ is a linear map such that for all $\mathrm{x,y\in L, u\in V}$:
		\begin{eqnarray}
			&&\mathrm{d_V\rho(x)u=\rho(dx)u+\rho(x)R_V(u)-\rho(Rx)R_V(u),}\\
			&& \mathrm{R_V\circ d_V=d_V\circ R_V.} \label{rep ReyLieDer pair 2}
		\end{eqnarray}		
\end{defn}
\begin{ex}
	Let $\mathrm{x\in L}$, we define $\mathrm{ad_x:L\rightarrow L}$ by $\mathrm{ad_x(y)=[x,y]}$, $\forall y\in L.$ Then $\mathrm{(L;ad,R,d)}$ is a representation of the Reynolds LieDer pair $\mathrm{(\mathbb{L},R,d)}$. Which is called the adjoint representation.
\end{ex}
\begin{defn}
	Let $(\mathrm{L,[-,-]_L,R_L,d_L})$ and $\mathrm{(G,[-,-]_G,R_G,d_G)}$ be two Reynolds LieDer pairs. Then $\mathrm{\varphi:L\rightarrow G}$ is said to be a homomorphism of Reynolds LieDer pairs if $\mathrm{\varphi}$ is a homomorphism of Lie algebra such that 
	\begin{eqnarray*}
		\mathrm{R_G\circ \varphi=\varphi\circ R_L} \text{ and } \mathrm{d_G\circ \varphi=\varphi\circ d_L}.
	\end{eqnarray*} 
\end{defn}
Let $(\mathrm{L,[-,-]_L})$ and $\mathrm{(G,[-,-]_G)}$ be two Lie algebras with their respective representations $(\mathrm{G,\rho_L})$ and $(\mathrm{L,\rho_G})$. Then $(\mathrm{L,G,\rho_L,\rho_G})$ is called a matched pair of Lie algebras, see \cite{M0} for more details, if the compatibility condition holds.
\begin{eqnarray*}
	&&\mathrm{\rho_L(x)[a,b]_G-[\rho_L(x)a,b]_G-[a,\rho_L(x)b]_G+\rho_L(\rho_G(a)x)b-\rho_L(\rho_G(b)x)a=0,}\\
	&&\rho_G(a)[x,y]_L-[\rho_G(a)x,y]_L-[x,\rho_G(a)y]_L+\rho_G(\rho_L(x)a)y-\rho_G(\rho_L(y)a)x=0. \quad \mathrm{\forall x,y\in L, a,b\in G}.
\end{eqnarray*} 
For Lie algebras $\mathrm{(L,[-,-]_L)}$ and $(\mathrm{G,[-,-]_G})$ with linear maps $\mathrm{\rho_L:L\rightarrow gl(G)}$ and $\rho_G:G\rightarrow gl(L)$, there is a Lie algebra structure on the vector space $\mathrm{L\oplus G}$ by 
\begin{equation*}
	\mathrm{[x+a,y+b]=[x,y]_L+\rho_G(a)y-\rho_G(b)x+[a,b]_G+\rho_L(x)b-\rho_L(y)a, \quad \forall x,y\in L, a,b \in G,}
\end{equation*}
if and only if $\mathrm{(L,G,\rho_L,\rho_G)}$ is a matched pair of Lie algebras and we denote it by $\mathrm{(L\oplus G,[-,-])}$ or simply $\mathrm{L \bowtie G}$.
\begin{re}
	The compatibility conditions of matched pair of Lie algebras comes from the Jacobi identity of the resulting Lie algebra $\mathrm{L \bowtie G}$.
\end{re}
Now, we extend this construction to Reynolds Lie algebras.
\begin{defn}
	A matched pair of Reynolds Lie algebras is a quadruple $(\mathrm{(\mathbb{L},R_L),(\mathbb{G},R_G),\rho_L,\rho_G)}$ such that $(\mathrm{L,[-,-]_L,R_L})$ and $(\mathrm{G,[-,-]_G,R_G})$ are Reynolds Lie algebras, $\mathrm{(G;\rho_L,R_G)}$ and $(\mathrm{L;\rho_G,R_L})$ are respectively representations of $\mathrm{(L,R_L)}$ and $(\mathrm{G,R_G})$ and $(\mathrm{L,G,\rho_L,\rho_G})$ is a matched pair of Lie algebras.
\end{defn}
\begin{prop}
	Let $(\mathrm{L,[-,-]_L,R_L)}$ and $(\mathrm{G,[-,-]_G,R_G})$ be Reynolds Lie algebras and $\mathrm{(L,G,\rho_L,\rho_G)}$ be a matched pair of Lie algebras. Then $(\mathrm{L\bowtie G,[-,-],R_L+R_G})$ is a Reynolds Lie algebra if and only if $\mathrm{((L,R_L),(G,R_G),\rho_L,\rho_G})$ is a matched pair of the Reynolds Lie algebras $(\mathrm{L,[,-]_L,R_L})$ and $(\mathrm{G,[-,-]_G,R_G})$, with
	\begin{equation*}
		\mathrm{R_L+R_G(x+a)=R_L(x)+R_G(a),\quad \forall x\in L , a\in G.}
	\end{equation*}
\end{prop}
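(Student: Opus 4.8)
The plan is to exploit the hypothesis that $(L \bowtie G, [-,-])$ is \emph{already} a Lie algebra, so that the only thing left to check for $(L \bowtie G, [-,-], R_L + R_G)$ to be a Reynolds Lie algebra is the single Reynolds identity \eqref{Reynolds operator} for the operator $R := R_L + R_G$, namely
\begin{equation*}
[R(x+a), R(y+b)] = R\big([R(x+a), y+b] + [x+a, R(y+b)] - [R(x+a), R(y+b)]\big)
\end{equation*}
for all $x,y \in L$, $a,b \in G$. Both sides are bilinear in the pair $(x+a,\,y+b)$, so by splitting each slot into its $L$- and $G$-parts I would reduce this to the conjunction of four independent blocks: the pure block in $(x,y)$, the pure block in $(a,b)$, and the two mixed blocks in $(x,b)$ and $(a,y)$.

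First I would dispose of the two pure blocks. Taking $a = b = 0$ keeps every bracket inside $L$, so $R$ acts as $R_L$ and the identity becomes exactly the Reynolds condition for $R_L$ on $(L, [-,-]_L)$, which holds by hypothesis; symmetrically, taking $x = y = 0$ gives the Reynolds condition for $R_G$ on $(G, [-,-]_G)$. Hence these blocks impose no constraint.

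The heart of the proof is the mixed block, which I would isolate by feeding in $x \in L$ and $b \in G$ (the $(a,y)$ block following by antisymmetry of the bracket). Expanding $[R_L x, R_G b]$ and the three brackets on the right-hand side with the matched-pair formula and projecting onto $L$ and $G$ separately, the $L$-component collapses to
\begin{equation*}
\rho_G(R_G b)\, R_L x = R_L\big(\rho_G(b) R_L x + \rho_G(R_G b) x - \rho_G(R_G b) R_L x\big),
\end{equation*}
which is precisely condition \eqref{eqt representation Rey Lie algebra} stating that $(L; \rho_G, R_L)$ is a representation of the Reynolds Lie algebra $(G, R_G)$, while the $G$-component collapses to
\begin{equation*}
\rho_L(R_L x)\, R_G b = R_G\big(\rho_L(x) R_G b + \rho_L(R_L x) b - \rho_L(R_L x) R_G b\big),
\end{equation*}
which states that $(G; \rho_L, R_G)$ is a representation of $(L, R_L)$. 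Since these two representation conditions, together with the matched-pair-of-Lie-algebras hypothesis, are by definition exactly what it means for $((L,R_L),(G,R_G),\rho_L,\rho_G)$ to be a matched pair of Reynolds Lie algebras, both directions of the equivalence drop out of this computation simultaneously.

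The obstacle I anticipate is organizational rather than conceptual: the mixed block produces a sizable collection of terms, since the bracket formula contributes a $\rho_L$- and a $\rho_G$-term on each of three brackets and $R = R_L + R_G$ must then be applied and separated by component. The real care lies in the bookkeeping of signs and in verifying that the $L$-projection keeps only the $\rho_G$-terms acted on by $R_L$ and the $G$-projection only the $\rho_L$-terms acted on by $R_G$, so that the two representation identities emerge with no surviving cross terms. I would also flag the evident typo in \eqref{Reynolds operator} and work throughout with the correct form $[Rx, Ry] = R([Rx, y] + [x, Ry] - [Rx, Ry])$.
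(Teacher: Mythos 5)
Your proposal is correct, and its computational core is the same as the paper's: expand the bracket on $\mathrm{L\oplus G}$ via the matched-pair formula, apply $\mathrm{R_L+R_G}$, and recognize the Reynolds conditions on $\mathrm{R_L}$, $\mathrm{R_G}$ together with the two instances of the representation identity \eqref{eqt representation Rey Lie algebra}. The genuine difference is organizational: the paper proves only the direction ``matched pair $\Rightarrow$ Reynolds operator on $\mathrm{L\bowtie G}$'' by one monolithic computation with general elements $\mathrm{x+a}$, $\mathrm{y+b}$, and dismisses the converse as ``straightforward to check,'' whereas your bilinearity splitting of the defect $\mathrm{F(z,w)=[Rz,Rw]-R([Rz,w]+[z,Rw]-[Rz,Rw])}$ into the four blocks $(x,y)$, $(a,b)$, $(x,b)$, $(a,y)$ (the last reducible to $(x,b)$ by antisymmetry of $\mathrm{F}$) delivers both implications from a single computation, since the pure blocks vanish automatically and the mixed block vanishes identically if and only if the two representation conditions hold. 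This buys a rigorous proof of the ``only if'' direction that the paper never writes down, and it keeps the bookkeeping smaller, since one never needs to track all twelve terms of the fully general bracket at once; your identification of the $L$-projection with the condition that $\mathrm{(L;\rho_G,R_L)}$ represents $\mathrm{(G,R_G)}$ and of the $G$-projection with the condition that $\mathrm{(G;\rho_L,R_G)}$ represents $\mathrm{(L,R_L)}$ checks out exactly, as does your correction of the typo in \eqref{Reynolds operator}.
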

\begin{proof}
	The first sens is straightforward to check.\\
	For the second sens, suppose that $(\mathrm{(L,R_L),(G,R_G),\rho_L,\rho_G})$ is a matched pair of $(\mathrm{L,R_L})$ and $(G,R_G)$, which means that $(\mathrm{L,G,\rho_L,\rho_G})$ is a matched pair and it includes that $(\mathrm{L\bowtie G,[-,-])}$ is a Lie algebra. Now all we need to check is that $\mathrm{R_L+R_G}$ is a Reynolds operator on $\mathrm{(L\bowtie G,[-,-])}$.
	\begin{eqnarray*}
		\mathrm{[R_L+R_G(x+a),R_L+R_G(y+b)]}&=&\mathrm{[R_L(x)+R_L(y)]_L+\rho_G(R_G(a))R_L(y)-\rho_G(R_G(b))R_L(x)}\\
		&+&\mathrm{[R_G(a),R_G(b)]_G+\rho_L(R_L(x))R_G(b)-\rho_L(R_L(y))R_G(a)}\\
		&=&\mathrm{[R_L(x),R_L(y)]_L+R_L(y)\Big(\rho_G(R_G(a))y+\rho_G(a)R_L(y)-\rho_G(R_G(a))R_L(y)\Big)}\\
		&-&\mathrm{R_L(x)\Big(\rho_G(R_G(b))x+\rho_G(b)R_L(x)\Big)}\\
		&+&\mathrm{[R_G(a),R_G(b)]_G+R_G(b)\Big(\rho_L(R_L(x))b+rho_L(x)R_G(b)-\rho_L(R_L(x))R_G(b)\Big)}\\
		&-&\mathrm{R_G(a)\Big(\rho_L(R_L(y))a+\rho_L(y)R_G(a)-\rho_L(R_L(y))R_G(a)\Big)}\\
		&=&\mathrm{(R_L+R_G)\Big([R_L(x),y]_L+\rho_G(R_G(a))y-\rho_G(b)R_L(x)}\\
		&+&\mathrm{[R_G(a),b]_G+\rho_L(R_L(x))b-\rho_L(y)R_G(a) \Big)} \\
		&+&\mathrm{(R_L+R_G) \Big([x,R_L(y)]_L+\rho_G(a)R_L(y)-\rho_G(R_G(b))x}\\
		&+&\mathrm{[a,R_G(b)]_G+\rho_L(x)R_G(b)-\rho_L(R_L(y))a \big)}\\
		&-&\mathrm{(R_L+R_G) \Big([R_L(x),R_L(y)]_L+\rho_G(R_G(a))r_L(y)-\rho_G(R_G(b))R_L(x)}\\
		&+&\mathrm{[R_G(a),R_G(b)]_G+\rho_L(R_L(x))R_G(b)-\rho_L(R_L(y))R_G(a) \big)}\\
		&=&\mathrm{(R_L+R_G) ([R_L+R_G(x+a),y+b]+[x+a,R_L+R_G(y+b)]}\\
		&-&\mathrm{[R_L+R_G(x+a),R_L+R_G(y+b)])}.
	\end{eqnarray*}
	This complete the proof.
\end{proof}
In the next proposition, we generalize the previous result to the case of Reynolds LieDer pairs.
\begin{prop}
	Let $\mathrm{(\mathbb{L},R_L,d_L)}$ and $\mathrm{(\mathbb{G},R_G,d_G)}$ be two Reynolds LieDer pairs. Suppose that $(\mathrm{(L,R_L),(G,R_G),\rho_L,\rho_G)}$ is a matched pair of the Reynolds Lie algebra $\mathrm{(\mathbb{L},R_L)}$ and $\mathrm{(\mathbb{G},R_G)}$. Then $\mathrm{(L\bowtie G,[-,-],R_L+R_G,d_L+d_G)}$ is a Reynolds LiDer pair where $\mathrm{d_L+d_G:L\bowtie G \rightarrow L\bowtie G}$ is defined by 
	\begin{equation*}
		\mathrm{d_L+d_G(x+a):=d_L(x)+d_G(a),\quad \forall x\in L , a \in G}.
	\end{equation*}
\end{prop}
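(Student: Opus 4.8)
The plan is to check the two defining requirements of a Reynolds LieDer pair for the quadruple $(L\bowtie G,[-,-],R_L+R_G,d_L+d_G)$: that the underlying triple is a Reynolds Lie algebra, that $d_L+d_G$ is a derivation of the bracket $[-,-]$ on $L\bowtie G$, and that $R_L+R_G$ commutes with $d_L+d_G$. The first requirement is immediate: since $((L,R_L),(G,R_G),\rho_L,\rho_G)$ is a matched pair of the Reynolds Lie algebras, the preceding proposition already guarantees that $(L\bowtie G,[-,-],R_L+R_G)$ is a Reynolds Lie algebra, so nothing new is needed there.

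The commutation relation $(R_L+R_G)\circ(d_L+d_G)=(d_L+d_G)\circ(R_L+R_G)$ is the easiest step. Evaluating both sides on an arbitrary element $x+a$ with $x\in L$, $a\in G$, and using that both $R_L+R_G$ and $d_L+d_G$ act componentwise, the identity reduces to $R_L d_L x=d_L R_L x$ in $L$ together with $R_G d_G a=d_G R_G a$ in $G$. Both hold because $(\mathbb{L},R_L,d_L)$ and $(\mathbb{G},R_G,d_G)$ are themselves Reynolds LieDer pairs, hence satisfy $\eqref{definition of ReyLieDer}$.

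The core of the argument is showing that $d_L+d_G$ is a derivation of $[-,-]$. First I would expand $[x+a,y+b]$ via the matched-pair bracket, which splits into the six summands $[x,y]_L$, $\rho_G(a)y$, $-\rho_G(b)x$, $[a,b]_G$, $\rho_L(x)b$, $-\rho_L(y)a$. Since $d_L+d_G$ acts as $d_L$ on the $L$-valued summands and as $d_G$ on the $G$-valued ones, I would apply it, reassemble, and compare with $[(d_L+d_G)(x+a),y+b]+[x+a,(d_L+d_G)(y+b)]$ expanded the same way. The pure terms $d_L[x,y]_L=[d_Lx,y]_L+[x,d_Ly]_L$ and $d_G[a,b]_G=[d_Ga,b]_G+[a,d_Gb]_G$ match because $d_L$ and $d_G$ are derivations of $L$ and $G$ respectively. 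The remaining mixed terms split into an $L$-component and a $G$-component, and matching them amounts exactly to the two Leibniz-type compatibilities $d_L(\rho_G(b)x)=\rho_G(b)d_Lx+\rho_G(d_Gb)x$ and $d_G(\rho_L(x)b)=\rho_L(d_Lx)b+\rho_L(x)d_Gb$.

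The main obstacle is therefore not the algebra but securing these mixed-term identities, which are precisely the derivation–representation compatibilities expressing that $(G;\rho_L,d_G)$ and $(L;\rho_G,d_L)$ are compatible with the respective LieDer structures. These follow from the representation axioms built into the matched pair of Reynolds LieDer pairs; once they are invoked, the six cross terms cancel pairwise across the two sides and the Leibniz rule for $d_L+d_G$ follows. With the derivation property and the commutation relation both established, and the Reynolds Lie algebra structure inherited from the previous proposition, the quadruple satisfies all conditions of a Reynolds LieDer pair, completing the proof.
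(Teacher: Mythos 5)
Your overall strategy is the natural one, and its first two steps are sound: the Reynolds Lie algebra structure on $L\bowtie G$ is indeed inherited from the preceding proposition, and the commutation of $R_L+R_G$ with $d_L+d_G$ does reduce componentwise to the relations $R_L\circ d_L=d_L\circ R_L$ and $R_G\circ d_G=d_G\circ R_G$ supplied by the two Reynolds LieDer pairs. For what it is worth, the paper states this proposition with no proof at all, so your attempt is filling a genuine omission rather than deviating from a given argument.

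However, there is a real gap in your third step, and it is located exactly where you wave it away. You correctly isolate the two mixed-term identities $d_L(\rho_G(b)x)=\rho_G(b)d_Lx+\rho_G(d_Gb)x$ and $d_G(\rho_L(x)b)=\rho_L(d_Lx)b+\rho_L(x)d_Gb$ as what the Leibniz rule for $d_L+d_G$ requires, but your assertion that they "follow from the representation axioms built into the matched pair of Reynolds LieDer pairs" is unfounded: the hypothesis of the proposition is a matched pair of Reynolds \emph{Lie algebras}, whose axioms only say that $(G;\rho_L,R_G)$ and $(L;\rho_G,R_L)$ are representations of the Reynolds Lie algebras $(L,R_L)$ and $(G,R_G)$ and that $(L,G,\rho_L,\rho_G)$ is a matched pair of Lie algebras. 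No hypothesis anywhere relates $d_L$ or $d_G$ to $\rho_L$ or $\rho_G$; the derivations enter only through the two separate Reynolds LieDer pairs, where they interact with the brackets and the Reynolds operators but not with the cross-representations. In fact the needed identities can fail under the stated hypotheses: take $L=\mathbb{K}x$ and $G=\mathbb{K}b$ both one-dimensional abelian, $R_L=0$, $R_G=0$, $\rho_G=0$, $\rho_L(x)=\mathrm{id}_G$, $d_L=\mathrm{id}_L$, $d_G=0$. All hypotheses of the proposition are satisfied, yet $(d_L+d_G)[x,b]=d_G(\rho_L(x)b)=0$ while $[d_Lx,b]+[x,d_Gb]=\rho_L(x)b=b\neq 0$, so $d_L+d_G$ is not a derivation of $L\bowtie G$. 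The conclusion is that the two compatibilities must be imposed as additional hypotheses (equivalently, one should require $(G;\rho_L,R_G,d_G)$ and $(L;\rho_G,R_L,d_L)$ to be representations of the respective Reynolds LieDer pairs); this is really a defect of the paper's formulation of the proposition, but your proof inherits it by silently invoking axioms that are not there.
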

\section{Cohomologies of Reynolds LieDer pairs}\label{section 3}
In section \eqref{section 2}, we presented the cohomology of Reynolds Lie algebra with coefficients in a representation $\mathrm{(\mathcal{V},R_V)}$. 
In this section, we extend that result to the case of LieDer pair structure in reason to investigate cohomologies of Reynolds LieDer pairs, which is inspired by \cite{B0,B1,Q0} .\\
\newline
Define a linear map $\mathrm{\Delta:C^n_R(L;V)\rightarrow C_R^n(L;V)}$ by
\begin{equation*}
	\mathrm{\Delta(f,g):=(\Delta f,\Delta g),\quad \forall(f,g)\in C^n_R(L;V), }
\end{equation*}
where 
\begin{equation*}
	\mathrm{\Delta f=\displaystyle\sum_{i=1}^nf\circ(id\otimes\cdots d_L\otimes\cdots\otimes id)-d_V\circ f}
\end{equation*}
with the previous notation, we show that $\mathrm{\varphi}$ and $\mathrm{\Delta}$ are commutative where we will use it in the cohomology investigation.
\begin{prop}
The linear maps $\mathrm{\varphi}$ and $\mathrm{\Delta}$ are commutative i,e.
\begin{equation}\label{commutativity of linear maps}
	\mathrm{\varphi \circ \Delta=\Delta\circ\varphi}
\end{equation}
\end{prop}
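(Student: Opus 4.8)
The plan is to recognize that both $\mathrm{\varphi}$ and $\mathrm{\Delta}$ are assembled from the same few elementary operations on cochains, and that these elementary operations pairwise commute by virtue of the two defining relations $R\circ d_L=d_L\circ R$ (equation \eqref{definition of ReyLieDer}) and $R_V\circ d_V=d_V\circ R_V$ (equation \eqref{rep ReyLieDer pair 2}); once this is set up, \eqref{commutativity of linear maps} is automatic. First I would record the two maps explicitly on $f\in C^n_{Lie}(L;V)$ and arguments $x_1,\cdots,x_n$:
\[
\mathrm{(\varphi f)(x_1,\cdots,x_n)=f(Rx_1,\cdots,Rx_n)-R_V\sum_{i=1}^n f(Rx_1,\cdots,x_i,\cdots,Rx_n)+(n-1)R_V f(Rx_1,\cdots,Rx_n),}
\]
\[
\mathrm{(\Delta f)(x_1,\cdots,x_n)=\sum_{i=1}^n f(x_1,\cdots,d_L x_i,\cdots,x_n)-d_V f(x_1,\cdots,x_n).}
\]

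Next I would introduce, on $\mathrm{Hom}(\wedge^n L,V)$, the input-substitution operators $\mathcal{R}_i$ and $\mathcal{D}_i$ acting in slot $i$ by $\mathrm{(\mathcal{R}_i f)(\cdots)=f(\cdots,Rx_i,\cdots)}$ and $\mathrm{(\mathcal{D}_i f)(\cdots)=f(\cdots,d_L x_i,\cdots)}$, together with the output operators $R_V$ and $d_V$ acting by post-composition. In this notation the two maps become $\mathrm{\varphi=\mathcal{R}_1\cdots\mathcal{R}_n-R_V\sum_i\big(\prod_{j\neq i}\mathcal{R}_j\big)+(n-1)R_V\,\mathcal{R}_1\cdots\mathcal{R}_n}$ and $\mathrm{\Delta=\sum_i\mathcal{D}_i-d_V}$, so the whole statement reduces to checking that every building block of $\mathrm{\varphi}$ commutes with every building block of $\mathrm{\Delta}$. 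The commutations I would verify are: substitutions in distinct slots commute trivially; $\mathcal{R}_i$ and $\mathcal{D}_i$ in the \emph{same} slot commute because the two resulting substitutions $x_i\mapsto R(d_L x_i)$ and $x_i\mapsto d_L(Rx_i)$ coincide by \eqref{definition of ReyLieDer}; each input-substitution commutes with the output operators $R_V,d_V$, since one alters arguments while the other post-composes; and $R_V$ commutes with $d_V$ by \eqref{rep ReyLieDer pair 2}. Since $\mathrm{\varphi}$ and $\mathrm{\Delta}$ are then polynomials in a family of mutually commuting operators, they commute.

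The only genuine content lies in the same-slot identity $\mathcal{R}_i\mathcal{D}_i=\mathcal{D}_i\mathcal{R}_i$ and the output identity $R_V d_V=d_V R_V$, so these are the steps to treat with care; everything else is formal. It is worth emphasizing that neither $\mathrm{\varphi}$ nor $\mathrm{\Delta}$ involves $\rho$, so the representation-compatibility condition plays no role in this particular lemma. A more pedestrian alternative would be to expand $\mathrm{\varphi(\Delta f)}$ and $\mathrm{\Delta(\varphi f)}$ directly and match the resulting terms one by one; that route uses exactly the same two commutation relations but is bookkeeping-heavy, which is precisely why organizing the argument through the commuting elementary operators is preferable. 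Accordingly, I expect the main obstacle to be purely notational — correctly tracking which slots carry $R$, which carry $d_L$, and the relative placement of $R_V$ and $d_V$ — rather than any conceptual difficulty.
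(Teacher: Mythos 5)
Your proof is correct, and it takes a genuinely different route from the paper's, which is exactly the ``pedestrian alternative'' you mention: the paper expands $\mathrm{\varphi(\Delta f)(x_1,\cdots,x_n)}$ term by term, applies \eqref{definition of ReyLieDer} inside the arguments to replace $\mathrm{d_L\circ R}$ by $\mathrm{R\circ d_L}$, and then declares the resulting expression equal to $\mathrm{\Delta(\varphi f)(x_1,\cdots,x_n)}$. Your factorization through the commuting operators $\mathcal{R}_i$, $\mathcal{D}_i$, $\mathrm{R_V}$, $\mathrm{d_V}$ buys two things. First, it eliminates the double-sum bookkeeping (including the diagonal terms $i=k$, which the paper's notation actually garbles, writing $\mathrm{d_L\circ Rx_k}$ even in the slot that carries the un-$R$'d argument $\mathrm{x_i}$). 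Second, and more substantively, it makes explicit that the identity requires \eqref{rep ReyLieDer pair 2} ($\mathrm{R_V\circ d_V=d_V\circ R_V}$) in addition to \eqref{definition of ReyLieDer}: in the direct expansion, the terms $\mathrm{R_V d_V f(Rx_1,\cdots,x_i,\cdots,Rx_n)}$ and $\mathrm{R_V d_V f(Rx_1,\cdots,Rx_n)}$ arising from $\mathrm{\varphi(\Delta f)}$ must be matched against $\mathrm{d_V R_V f(\cdots)}$ in $\mathrm{\Delta(\varphi f)}$, and the paper performs this swap silently, citing only \eqref{definition of ReyLieDer}; your write-up repairs that omission. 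One small point to add for completeness: the individual slot operators $\mathcal{R}_i,\mathcal{D}_i$ do not preserve alternating cochains, so the commuting-generators argument should formally be run on the full space of multilinear maps $\mathrm{Hom(L^{\otimes n},V)}$, where all the operators are defined; since both $\mathrm{\varphi}$ and $\mathrm{\Delta}$ carry alternating maps to alternating maps, the identity \eqref{commutativity of linear maps} then follows by restriction to $\mathrm{Hom(\wedge^n L,V)}$.
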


\begin{proof}
	Let $\mathrm{f\in C^n_{Lie}(L;V)}$ and $\mathrm{x_1,\cdots,x_n\in L}$.
	\begin{eqnarray*}
		&&\mathrm{\varphi\circ\Delta f(x_1,\cdots,x_n)}=\\
		&&=\mathrm{\Delta f(Rx_1,\cdots,Rx_n)-R_V\displaystyle\sum_{i=1}^n \Delta f(Rx_1,\cdots,x_i,\cdots,Rx_n)+(n-1)R_V \Delta f(Rx_1,\cdots,Rx_n)}\\
		&&=\mathrm{\displaystyle\sum_{k=1}^n f(Rx_1,\cdots,d_L\circ Rx_k,\cdots,Rx_n)-d_V\circ f(Rx_1,\cdots,Rx_n)}\\
		&&-\mathrm{R_V\Big(\displaystyle\sum_{i=1}^n\sum_{k=1}^nf(Rx_1,\cdots,x_i,\cdots,d_L\circ Rx_k,\cdots,Rx_n)-d_V\circ f(Rx_1,\cdots,x_i,\cdots,Rx_n) \Big)}\\
		&&+\mathrm{(n-1)R_V\Big(\displaystyle\sum_{k=1}f(Rx_1,\cdots,d_L\circ Rx_k,\cdots,Rx_n)-d_V\circ f(Rx_1,\cdots,Rx_n)}\\
		&&\overset{\eqref{definition of ReyLieDer}}{=}\mathrm{\displaystyle\sum_{k=1}^n f(Rx_1,\cdots,Rx_k\circ d_L,\cdots,Rx_n)-d_V\circ f(Rx_1,\cdots,Rx_n)}\\
		&&-\mathrm{R_V\Big(\displaystyle\sum_{i=1}^n\sum_{k=1}^nf(Rx_1,\cdots,x_i,\cdots,Rx_k\circ d_L,\cdots,Rx_n)-d_V\circ f(Rx_1,\cdots,x_i,\cdots,Rx_n) \Big)}\\
		&&+\mathrm{(n-1)R_V\Big(\displaystyle\sum_{k=1}f(Rx_1,\cdots,Rx_k\circ d_L,\cdots,Rx_n)-d_V\circ f(Rx_1,\cdots,Rx_n)}\\
		&&=\mathrm{\Delta\circ \varphi f(x_1,\cdots,x_n)}
	\end{eqnarray*}
\end{proof}
The cohomology of a LieDer pair $\mathrm{(\mathbb{L},d)}$ with coefficients in the representation $\mathrm{(V;\rho,d_V)}$ was introduced in (\cite{R0}, Lemma (3.1)) where the following equation holds
\begin{equation}\label{coboundary 1}
	\mathrm{\delta_{CE}\circ \Delta=\Delta\circ \delta_{CE}}
\end{equation}
and since $\mathrm{(L,[-,-]_R)}$ is also a Lie algebra and $\mathrm{\delta_R}$ denotes its associated coboundary operator with respect to the representation $\mathrm{(V,\rho_R)}$ and given that the triple $\mathrm{(L,[-,-]_R,d)}$ forms a LieDer pair, we obtain the following analogous result
\begin{equation}\label{coboundary 2}
	\mathrm{\delta_R\circ \Delta=\Delta\circ \delta_R}
\end{equation} 
\begin{prop}
	With the above notations, $\Delta$ is a cochain map, i.e.
	\begin{equation}\label{coboundary 3}
		\mathrm{D_R\circ \Delta=\Delta\circ D_R}
	\end{equation}
\end{prop}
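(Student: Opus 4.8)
The plan is to evaluate both composites on an arbitrary pair $\mathrm{(f,g)\in C^n_R(L;V)}$ and to compare the two coordinates of the output separately, since $\mathrm{D_R}$ and $\mathrm{\Delta}$ are both given coordinatewise. Using the definition \eqref{coboundary map of Rey Lie algera} of $\mathrm{D_R}$ together with $\mathrm{\Delta(f,g)=(\Delta f,\Delta g)}$, one expands
\begin{align*}
\mathrm{D_R\circ\Delta(f,g)}&=\mathrm{\big(\delta_{CE}(\Delta f),\,-\delta_R(\Delta g)-\varphi(\Delta f)\big)},\\
\mathrm{\Delta\circ D_R(f,g)}&=\mathrm{\big(\Delta(\delta_{CE}f),\,-\Delta(\delta_R g)-\Delta(\varphi f)\big)},
\end{align*}
where in the second line I would use that $\mathrm{\Delta}$ is linear in order to distribute it over the sum $\mathrm{-\delta_R g-\varphi f}$.

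First, the two first coordinates coincide immediately by \eqref{coboundary 1}. Next, for the second coordinates it suffices to verify the identity $\mathrm{\delta_R(\Delta g)+\varphi(\Delta f)=\Delta(\delta_R g)+\Delta(\varphi f)}$, and this splits into two independent pieces: the term involving $\mathrm{g}$ is settled by \eqref{coboundary 2}, that is $\mathrm{\delta_R\circ\Delta=\Delta\circ\delta_R}$, while the term involving $\mathrm{f}$ is settled by the commutativity \eqref{commutativity of linear maps}, that is $\mathrm{\varphi\circ\Delta=\Delta\circ\varphi}$. Assembling these two equalities yields the desired match in the second coordinate, and hence \eqref{coboundary 3}.

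There is no genuine obstacle here: the three structural identities \eqref{coboundary 1}, \eqref{coboundary 2} and \eqref{commutativity of linear maps} have already been established, so the present statement is obtained simply by combining them coordinatewise. The only point requiring a little care is the bookkeeping of the sign in the second coordinate of $\mathrm{D_R}$; but because $\mathrm{\Delta}$ is linear it commutes both with the overall sign and with the formation of the sum, so the two expressions agree term by term.
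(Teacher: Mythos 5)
Your proof is correct and takes essentially the same approach as the paper's: both expand $\mathrm{D_R\circ\Delta(f,g)}$ and $\mathrm{\Delta\circ D_R(f,g)}$ coordinatewise and conclude by invoking the three previously established commutation identities \eqref{coboundary 1}, \eqref{coboundary 2} and \eqref{commutativity of linear maps}, together with the linearity of $\mathrm{\Delta}$. Nothing is missing.
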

\begin{proof}
	Let $(\mathrm{f,g)\in C_R^n(L;V)}$ and using equations \eqref{commutativity of linear maps},\eqref{coboundary 1} and \eqref{coboundary 2} we have
	\begin{eqnarray*}
		\mathrm{D_R\circ \Delta(f,g)}&=&\mathrm{D_R(\Delta f,\Delta g)}\\
		&=&\mathrm{(\delta_{CE}\circ \Delta f,-\delta_R\circ \Delta g-\varphi \circ \Delta f)}\\
		&=&\mathrm{(\Delta\circ \delta_{CE},-\Delta \circ \delta_Rg-\Delta\circ \varphi f)}\\
		&=&\mathrm{\Delta(\delta_{CE}f,-\delta_Rg-\varphi f)}\\
		&=&\mathrm{\Delta\circ D_R(f,g).}
	\end{eqnarray*}
\end{proof}
Having established the necessary preliminary tools, we are now in a position to define the cohomology of Reynolds LieDer pairs $\mathrm{(\mathbb{L},R,d)}$ with coefficients in a representation $\mathrm{(\mathcal{V};R_V,d_V)}$. Denote
\begin{equation*}
	\mathrm{\mathfrak{C}_{RLieDer}^n(L;V):=C_R^n(L;V)\times C_R^{n-1}(L;V),\quad n\geq 2}
\end{equation*} 
and 
\begin{equation}
	\mathrm{\mathfrak{C}_{RLieDer}^1(L;V):=C_R^1(L;V)}.
\end{equation}
Define a linear map 
\begin{equation*}
	\mathrm{\mathfrak{D}_{RLieDer}:\mathfrak{C}_{RLieDer}^1(L;V)\rightarrow \mathfrak{C}_{RLieDer}^2(L;V)} \quad \text{such that}
\end{equation*}
\begin{equation*}
	\mathrm{\mathfrak{D}_{RLieDer}(f)=(D_R(f),-\Delta f),\quad \forall f\in C_R^1(L;V).}
\end{equation*}
And when $\mathrm{n\geq 2}$,
\begin{equation*}
	\mathrm{\mathfrak{D}_{RLieDer}:\mathfrak{C}_{RLieDer}^n(L;V)\rightarrow \mathfrak{C}_{RLieDer}^{n+1}(L;V)} \quad \text{such that}
\end{equation*}
\begin{equation*}
	\mathrm{\mathfrak{D}_{RLieDer}((f,g),(\tilde{f},\tilde{g}))=(D_R(f,g),D_R(\tilde{f},\tilde{g})+(-1)^n\Delta (f,g)),}
\end{equation*}
\begin{thm}
	With the above notations we have $\mathrm{\{ \oplus_{n\geq0}\mathfrak{C}^n_{RLieDer}(L;V),\mathfrak{D}_{RLieDer}\}}$ is a cochain complex, i.e.
	\begin{equation*}
		\mathrm{\mathfrak{D}_{RLieDer}\circ\mathfrak{D}_{RLieDer}=0.}
	\end{equation*}
\end{thm}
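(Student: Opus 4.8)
The plan is to recognize $\mathfrak{D}_{RLieDer}$ as the differential of the algebraic mapping cone of the cochain map $\Delta$ acting on $\{\oplus_{n\geq 0}C^n_R(L;V),D_R\}$. Once this is seen, $\mathfrak{D}_{RLieDer}\circ\mathfrak{D}_{RLieDer}=0$ should follow purely formally from the two ingredients already at our disposal: that $D_R$ squares to zero (so that $\{C^\bullet_R(L;V),D_R\}$ is a cochain complex), and that $\Delta$ commutes with $D_R$, namely equation \eqref{coboundary 3}. Since the differential is given by a different formula in degree $1$ than in degrees $n\geq 2$, I would organize the verification into these two cases.

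First I would dispose of the boundary case $n=1$. For $f\in\mathfrak{C}^1_{RLieDer}(L;V)=C^1_R(L;V)$ one has $\mathfrak{D}_{RLieDer}(f)=(D_R f,-\Delta f)$, and applying the degree-$2$ differential gives
\[
\mathfrak{D}_{RLieDer}^2(f)=\big(D_R^2 f,\ -D_R\Delta f+\Delta D_R f\big)=(0,0),
\]
where the first slot vanishes because $D_R$ is a coboundary operator and the second slot vanishes by \eqref{coboundary 3}.

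For the generic case $n\geq 2$, I would take $((f,g),(\tilde f,\tilde g))\in\mathfrak{C}^n_{RLieDer}(L;V)$ and apply the differential twice. The first component of the composite is $D_R\big(D_R(f,g)\big)=0$, while the second component assembles as
\[
D_R^2(\tilde f,\tilde g)+(-1)^n D_R\Delta(f,g)+(-1)^{n+1}\Delta D_R(f,g).
\]
The leading term is zero, and replacing $D_R\circ\Delta$ by $\Delta\circ D_R$ via \eqref{coboundary 3} turns the remaining two terms into $\big((-1)^n+(-1)^{n+1}\big)\Delta D_R(f,g)=0$. Hence both components vanish and $\mathfrak{D}_{RLieDer}^2=0$ on every level.

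I do not anticipate any genuine obstacle: all the substantive content has been front-loaded into the cochain-map identity \eqref{coboundary 3} and into the already-verified $D_R^2=0$. The only points requiring attention are the sign bookkeeping---the alternating factor $(-1)^n$ in the definition of $\mathfrak{D}_{RLieDer}$ is exactly what makes the two $\Delta D_R$ cross terms cancel---and the need to treat $n=1$ on its own, since there the formula for $\mathfrak{D}_{RLieDer}$ carries no second pair.
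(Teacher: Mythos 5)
Your proof is correct and follows essentially the same route as the paper: both arguments reduce $\mathfrak{D}_{RLieDer}\circ\mathfrak{D}_{RLieDer}=0$ to the two previously established facts $D_R\circ D_R=0$ and $D_R\circ\Delta=\Delta\circ D_R$ (equation \eqref{coboundary 3}), with the sign $(-1)^n$ producing the cancellation of the cross terms. Your treatment is in fact slightly more careful than the paper's, since you verify the degree-one case $\mathfrak{D}_{RLieDer}(f)=(D_Rf,-\Delta f)$ separately (the paper only computes the generic case $n\geq 2$), and the mapping-cone interpretation is a nice conceptual gloss, but it does not change the substance of the computation.
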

\begin{proof}
	Let $\mathrm{((f,g),(\tilde{f},\tilde{g}))\in \mathfrak{C}^n_{RLieDer}(L;V)}$ and using the fact that $\mathrm{\{\oplus_{n\geq0}C^n_{Lie}(L;V),\delta_{CE}\}}$ and $\mathrm{\{\oplus_{n\geq0}C^n_{R}(L;V),\delta_{R}\}}$ are cochain complex and equation \eqref{coboundary 3} we have
	\begin{eqnarray*}
		\mathrm{\mathfrak{D}_{RLieDer}\circ\mathfrak{D}_{RLieDer}\Big((f,g),(\tilde{f},\tilde{g})\Big)}&=&\mathrm{\mathfrak{D}_{RLieDer}\Big(D_R(f,g),D_R(\tilde{f},\tilde{g})+(-1)^n\Delta(f,g)\Big)}\\
		 &=&\mathrm{\Big(D_R^2(f,g),D_R((\tilde{f},\tilde{g})+(-1)^n\Delta(f,g))+(-1)^n\Delta\circ D_R(f,g) \Big)}\\
		 &=&\mathrm{(0,D^2_R(\tilde{f},\tilde{g})+(-1)^nD_R\circ \Delta(f,g)+(-1)^{n+1}D_R\circ \Delta(f,g))}\\
		 &=&(0,0)
	\end{eqnarray*}
\end{proof}
By the representation $\mathrm{(\mathcal{V};R_V,d_V)}$ of the Reynolds LieDer pairs $(\mathrm{\mathbb{L},R,d})$, we obtain a cochain complex $\mathrm{\{\oplus_{n\geq0}\mathfrak{C}^n_{RLieDer}(L;V),\mathfrak{D}_{RLieDer}\}}$. The set of all $\mathrm{n}$-cocycles and $\mathrm{n}$-coboundaries is denoted respectively by $\mathrm{\mathcal{Z}^n_{RLieDer}(L;V)}$ and $\mathrm{\mathcal{B}^n_{RLieDer}(L;V)}$.
\begin{defn}
	Define $\mathrm{\mathcal{H}^n_{RLieDer}(L;V)=\frac{\mathcal{Z}^n_{RLieDer}(L;V)}{\mathcal{B}^n_{RLieDer}(L;V)}}$, which is called the $\mathrm{n}$-cohomology group of the Reynolds LieDer pair $\mathrm{(\mathbb{L},R,d)}$ with coefficients in the representation $\mathrm{(\mathcal{V},R_V,d_V)}$.
\end{defn}
\begin{prop}
	Let $\mathrm{(\mathcal{V},R_V,d_V)}$ be a representation of a Reynold LieDer pair $\mathrm{(\mathbb{L},R,d)}$. Then 
	\begin{equation*}
		\mathrm{\mathcal{H}^1_{RLieDer}(L;V)=\{f; f\in \mathcal{Z}^1_{R}(L;V) \ , \  f\circ d_L=d_V\circ f \}}.
	\end{equation*}
\end{prop}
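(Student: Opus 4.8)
The plan is to read off the first cohomology directly from the complex $\{\oplus_{n\geq 0}\mathfrak{C}^n_{RLieDer}(L;V),\mathfrak{D}_{RLieDer}\}$, exploiting that degree $1$ is its bottom term. The first observation I would make is that, since $\mathfrak{C}^1_{RLieDer}(L;V)=C^1_R(L;V)$ carries no incoming differential (there is no $\mathfrak{C}^0_{RLieDer}(L;V)$, the operator $\mathfrak{D}_{RLieDer}$ being defined only from degree $1$ onward), the group of $1$-coboundaries $\mathcal{B}^1_{RLieDer}(L;V)$ vanishes. Hence $\mathcal{H}^1_{RLieDer}(L;V)=\mathcal{Z}^1_{RLieDer}(L;V)=\ker\big(\mathfrak{D}_{RLieDer}\colon\mathfrak{C}^1_{RLieDer}(L;V)\to\mathfrak{C}^2_{RLieDer}(L;V)\big)$, so the task reduces to describing this kernel.

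The second step is to unwind the degree-$1$ differential $\mathfrak{D}_{RLieDer}(f)=(D_R(f),-\Delta f)$. Vanishing of $\mathfrak{D}_{RLieDer}(f)$ is equivalent to the simultaneous conditions $D_R(f)=0$ and $\Delta f=0$. The first of these is, by definition of the Reynolds Lie algebra complex, the requirement that $f$ be a $1$-cocycle, namely $f\in\mathcal{Z}^1_R(L;V)$, which is precisely the membership condition in the set on the right-hand side of the claim.

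The third step is to identify $\Delta f=0$ with the stated derivation-compatibility relation. Specializing the defining formula $\Delta f=\sum_{i=1}^n f\circ(\mathrm{id}\otimes\cdots d_L\cdots\otimes\mathrm{id})-d_V\circ f$ to a degree-$1$ cochain $f\in\mathrm{Hom}(L,V)$ leaves the single summand $\Delta f=f\circ d_L-d_V\circ f$, whence $\Delta f=0$ is exactly $f\circ d_L=d_V\circ f$. Combining this with $f\in\mathcal{Z}^1_R(L;V)$ from the previous step yields $\mathcal{Z}^1_{RLieDer}(L;V)=\{f\in\mathcal{Z}^1_R(L;V)\,;\,f\circ d_L=d_V\circ f\}$, which by the first step equals $\mathcal{H}^1_{RLieDer}(L;V)$.

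I do not expect a genuine obstacle, since the argument is a direct unwinding of the definition of $\mathfrak{D}_{RLieDer}$ in degree $1$. The only points demanding care are bookkeeping ones: verifying that the complex truly begins in degree $1$ so that $\mathcal{B}^1_{RLieDer}(L;V)=0$, and correctly restricting the general cochain map $\Delta$ to degree $1$. In particular one must check what $\Delta$ does to the $C^0_{Lie}(L_R;V)=V$ summand of $C^1_R(L;V)$, where the defining sum is empty and only the $-d_V$ term survives; confirming that this contribution is subsumed under the uniform writing $f\circ d_L=d_V\circ f$ is the single delicate point of the verification.
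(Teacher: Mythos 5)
Your proof is correct and follows essentially the same route as the paper's: both reduce $\mathcal{H}^1_{RLieDer}(L;V)$ to the kernel of $\mathfrak{D}_{RLieDer}$ in degree one (the complex having no incoming differential there, so $\mathcal{B}^1_{RLieDer}(L;V)=0$) and then unwind $\mathfrak{D}_{RLieDer}(f)=(D_R f,-\Delta f)$ into the two conditions $f\in\mathcal{Z}^1_R(L;V)$ and $f\circ d_L=d_V\circ f$. If anything you are more careful than the paper, whose three-line proof silently ignores the $C^0_{Lie}(L_R;V)\cong V$ summand of $C^1_R(L;V)$ that you flag as the one delicate bookkeeping point.
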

\begin{proof}
	Let $\mathrm{f\in C^1_{RLieDer}(L;V)}$, we have 
	\begin{equation*}
		\mathrm{\mathfrak{D}^1_{RLieDer}f=(D_Rf,-\Delta f)}
	\end{equation*}
	So, $\mathrm{f}$ is closed if and only if $\mathrm{f\in \mathcal{Z}^1_{R}(L;V)}$ and $\mathrm{f\circ d_L=d_V\circ f}$.
\end{proof}

\section{Formal deformation of Reynolds LieDer pairs}\label{section 4}
\def\theequation{\arabic{section}.\arabic{equation}}
\setcounter{equation} {0}
In this section, we study a one-parameter formal deformation of Reynolds LieDer pairs, in which the Lie bracket, the Reynolds operator, and the derivation are simultaneously deformed. We also investigate the connection between such deformations and the cohomology of Reynolds LieDer pairs with coefficients in the adjoint representation. In particular, we show that infinitesimal deformations are exactly 2-cocycles in the cochain complex $\mathrm{\Big(\oplus_{n\geq 0}\mathfrak{C}^n_{RLieDer}(L,L),\mathfrak{D}_\mathrm{RLieDer}\Big)}.$ Moreover, if two formal deformations are equivalent, then they belong to the same cohomology class.\\
 We use the notation
$\mathrm{\mu}$ for the bilinear product $\mathrm{[-,-]}$ and the adjoint representation for Reynolds LieDer
pair. 
Consider the space $\mathrm{L[[t]]}$ of formal power series in $\mathrm{t}$ with coefficients from $\mathrm{L}$. Then $\mathrm{L[[t]]}$ is a $\mathrm{K[[t]]}$-
module. Note that the Lie algebra structure on $\mathrm{L}$ induces a Lie algebra structure on $\mathrm{L[[t]]}$ by $\mathrm{K[[t]]}$-
bilinearity.
\begin{defn}
	Let $\mathrm{(L,\mu,R,d)}$ be a Reynolds LieDer pair. A one-parameter formal deformation of
	$\mathrm{(L,\mu,R,d)}$ is a triple of power series $\mathrm{(\mu_\mathrm{t},R_\mathrm{t},d_\mathrm{t})},$
	
	\begin{eqnarray*}
		\mathrm{\mu_t:L[[t]]\otimes L[[t]]\rightarrow L[[t]] };\quad\mathrm{\mu_\mathrm{t}}&=&\mathrm{\sum _{i=0}^{\infty}\mu_it^i, \quad \mu_{i} \in C_{Lie}^2(L,L)},\\
		\mathrm{R_t:L[[t]]\rightarrow L[[t]] };\quad\mathrm{ R_\mathrm{t}}&=& \mathrm{\sum_{i=0}^{\infty} R_it^i,\quad R_i \in C^1_{Lie}(L,L)},\\
		\mathrm{d_t:L[[t]]\rightarrow L[[t]] };\quad\mathrm{d_\mathrm{t}}&=&\mathrm{\sum _{i=0}^{\infty}d_it^i,\quad d_i \in C^1_{Lie}(L,L)}.
	\end{eqnarray*}
	such that $\mathrm{(L[\![t]\!],\mu_\mathrm{t},R_\mathrm{t},d_\mathrm{t})}$ is a Reynolds LieDer pair, where
	$\mathrm{(\mu_0,R_{0},d_0)=(\mu,R,d)}.$
\end{defn}
	Therefore, $\mathrm{(\mu_\mathrm{t},R_\mathrm{t},d_\mathrm{t})}$ will be a formal one-parameter deformation of a Reynolds LieDer pair
$\mathrm{(L,\mu,R,d)}$ if and only if the following conditions are satisfied for any $\mathrm{a,b,c \in L}$
\begin{align*}
	\mathrm{\mu_\mathrm{t}(\mu_t(a,b),c)}&\mathrm{+\mu_\mathrm{t}(\mu_\mathrm{t}(b,c),a)+\mu_\mathrm{t}(\mu_\mathrm{t}(c,a),b)=0},\\
	\mathrm{\mu_\mathrm{t}(R_\mathrm{t}(a),R_\mathrm{t}(b))}&\mathrm{-R_\mathrm{t}(\mu_\mathrm{t}(a,R_\mathrm{t}(b))-\mu_\mathrm{t}(R_\mathrm{t}(a),b))+\mu_\mathrm{t} (R_\mathrm{t}(a),R_\mathrm{t}(b))=0},\\
	\mathrm{d_\mathrm{t}(\mu_\mathrm{t}(a,b))} &\mathrm{-\mu_\mathrm{t}(d_\mathrm{t}(a),b)-\mu_\mathrm{t}(a,d_\mathrm{t}(b))=0},\\
	\mathrm{R_\mathrm{t}\circ d_\mathrm{t}}&\mathrm{-d_\mathrm{t}\circ R_\mathrm{t}=0}.
\end{align*}
	Expanding the above equations and equating the coefficients of $\mathrm{t^n}$($\mathrm{n}$ non-negative integer) from
both sides, we get
\begin{align*}
	\mathrm{\sum _{\substack{\mathrm{i}+\mathrm{j}=n \\\mathrm{i},\mathrm{j}\geq 0}}\mu_\mathrm{i}(\mu _\mathrm{j}(a,b),c)}
	&\mathrm{+\sum _{\substack{\mathrm{i}+\mathrm{j}=n \\\mathrm{i},\mathrm{j}\geq 0}} \mu_\mathrm{i}(\mu_\mathrm{j} (b,c),a)
		+\sum _{\substack{\mathrm{i}+\mathrm{j}=n \\\mathrm{i},\mathrm{j}\geq 0}} \mu_\mathrm{i}(\mu_\mathrm{j} (c,a),b)=0},\\
	\mathrm{\sum_{\substack{\mathrm{i}+\mathrm{j}+k=n \\ \mathrm{i},\mathrm{j},k \geq 0}}\mu_\mathrm{i}(R_\mathrm{j}(a),R_k(b))}
	&\mathrm{-\sum_{\substack{\mathrm{i}+\mathrm{j}+k=n \\ \mathrm{i},\mathrm{j},k \geq 0}}R_\mathrm{i}(\mu_\mathrm{j}
		(R_k(a),b))-\sum_{\substack{\mathrm{i}+\mathrm{j}+k=n \\ \mathrm{i},\mathrm{j},k \geq 0}}R_\mathrm{i}
		(\mu_\mathrm{j}(a,R_k(b)))+ \sum_{\substack{\mathrm{i}+\mathrm{j}+\mathrm{k+p}=n \\ \mathrm{i},\mathrm{j},\mathrm{k,p} \geq 0}} \mathrm{R_i(\mu_j(R_k(a),R_p(b)))}=0},\\
	\mathrm{\sum _{\substack{\mathrm{i}+\mathrm{j}=n \\\mathrm{i},\mathrm{j}\geq 0}}d_\mathrm{i}(\mu_\mathrm{j}(a,b))}
	&\mathrm{-\sum _{\substack{\mathrm{i}+\mathrm{j}=n \\\mathrm{i},\mathrm{j}\geq 0}}\mu_\mathrm{j}(d_\mathrm{i}(a),b)
		-\mu_\mathrm{j}(a,d_\mathrm{i}(b))=0},\\
	\mathrm{\sum _{\substack{\mathrm{i}+\mathrm{j}=n \\\mathrm{i},\mathrm{j}\geq 0}}R_\mathrm{i}\circ d_\mathrm{j}}
	&\mathrm{-\sum _{\substack{\mathrm{i}+\mathrm{j}=n \\\mathrm{i},\mathrm{j}\geq 0}}d_\mathrm{i}\circ R_\mathrm{j}=0}.
\end{align*}
Note that for $\mathrm{n=0}$, the above equations are precisely the Jacobi identity of $\mathrm{(L,\mu)}$, the condition
for Reynolds operator, the condition for the derivation $\mathrm{d}$ on
$\mathrm{(L,\mu)}$ and the condition of compatibility of $\mathrm{R}$ and $\mathrm{d}$ respectively.
\\
Now, putting $\mathrm{n=1}$ in the above equations, we get
\begin{equation}\label{deformation eq1}
	\mathrm{\mu_1 (\mu(a,b),c)+\mu (\mu_1(a,b),c)+\mu_1(\mu (b,c),a) +\mu (\mu_1(b,c),a)+\mu_1 (\mu (c,a),b)
		+\mu(\mu_1 (c,a),b)=0},
\end{equation}

\begin{equation}\label{deformation eq2}
	\begin{split}
		&\mathrm{\mu_1(R(a),R(b))+\mu (R_1(a),R(b))+\mu (R(a),R_1(b))-R_1(\mu (R(a),b))-R(\mu (R_1(a),b))} \\
		&\mathrm{-R(\mu _1 (R(a),b))-R_1(\mu (a,R(b)))-R(\mu _1 (a,R(b)))-R(\mu (a,R_1(b)))}\\
		&+\mathrm{R_1\mu(R(a),R(b))+R\mu_1(R(a),R(b))+R\mu(R_1(a),R(b))+R\mu(R(a),R_1(b))=0,}
	\end{split}
\end{equation}

\begin{equation}\label{deformation eq3}
	\mathrm{d_1(\mu(a,b))+d(\mu_1(a,b))-\mu_1(d(a),b)-\mu(d_1(a),b)-\mu_1(a,d(b))-\mu(a,d_1(b))=0},
\end{equation}
and
\begin{equation}\label{deformation eq4}
	\mathrm{R_1\circ d+R\circ d_1-d_1\circ R-d\circ R_1=0}.
\end{equation}
	Where $\mathrm{a,b,c \in L}$.\\
From the equation \eqref{deformation eq1}, we have
\begin{equation}\label{deformation cocycle1}
	\mathrm{\delta_\mathrm{CE}(\mu_1)(a,b,c)=0},
\end{equation}
from the equation \eqref{deformation eq2}, we have
\begin{equation}\label{deformation cocycle2}
	\mathrm{\delta_\mathrm{R}\mathrm{(R_1)(a,b)}+\varphi\mu_1(\mathrm{a,b})=0},
\end{equation}
from the equation \eqref{deformation eq3}, we have
\begin{equation}\label{deformation cocycle3}
	\mathrm{\delta_\mathrm{CE}\mathrm{(d_1)(a,b)}+\Delta\mu_1(\mathrm{a,b})=0},
\end{equation}
and from the equation \eqref{deformation eq4}, we have
\begin{equation}
	\mathrm{\Delta\mathrm{R_1(a)}-\varphi\mathrm{d_1(a)}=0}.
\end{equation}
Therefore,
$\mathrm{\Big((\delta_\mathrm{CE}(\mu_1),-\delta_\mathrm{R}\mathrm{(R_1)}-\varphi\mu_1),
	(\delta_\mathrm{CE}\mathrm{(d_1)}+\Delta\mu_1,\Delta\mathrm{R_1}-\varphi\mathrm{d_1})\Big)=((0,0),(0,0))}$.\\
Hence,  $\mathrm{\mathfrak{D}_\mathrm{RLieDer}(\mu_1,R_1,d_1)=0}$.\\
This proves that $\mathrm{(\mu_1,\mathrm{R_1,d_1})}$ is a $\mathrm{2}$-cocycle in the cochain complex
$\mathrm{\Big(\oplus_{n\geq 0}\mathfrak{C}^n_{RLieDer}(L,L),\mathfrak{D}_\mathrm{RLieDer}\Big)}.$
Thus, from the above discussion, we have the following theorem.
	\begin{thm}\label{infy-co}
	Let $\mathrm{(\mu_\mathrm{t}, \mathrm{R_t},\mathrm{d_t})}$ be a one-parameter formal deformation of a Reynolds LieDer pair $\mathrm{(L,\mu,\mathrm{R,d})}$. Then $\mathrm{(\mu\mathrm{_1,R_1,d_1})}$ is a
	$\mathrm{2}$-cocycle in the cochain complex $\mathrm{\Big(\oplus_{n\geq 0}\mathfrak{C}^n_{RLieDer}(L,L),\mathfrak{D}_\mathrm{RLieDer}\Big)}.$
\end{thm}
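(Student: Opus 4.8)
The plan is to unwind the definition of a one-parameter formal deformation into its four defining conditions — the Jacobi identity for $\mu_t$, the Reynolds operator identity for $R_t$, the derivation identity relating $d_t$ and $\mu_t$, and the compatibility $R_t\circ d_t = d_t\circ R_t$ — then substitute the power-series expansions $\mu_t=\sum_i\mu_i t^i$, $R_t=\sum_i R_i t^i$, $d_t=\sum_i d_i t^i$ and read off the coefficient of $t^1$ in each. Because $(\mu_0,R_0,d_0)=(\mu,R,d)$ is the given Reynolds LieDer pair — which is exactly the vanishing of the $t^0$ coefficients — the four degree-one equations are linear in the unknowns $(\mu_1,R_1,d_1)$ and are the identities \eqref{deformation eq1}–\eqref{deformation eq4} recorded above.

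Next I would reinterpret each of these four equations as a coboundary condition. By the standard Gerstenhaber-type bookkeeping for the adjoint representation, the $t^1$ coefficient of the Jacobi identity is precisely $\delta_{CE}(\mu_1)=0$, giving \eqref{deformation cocycle1}. The $t^1$ coefficient of the Reynolds identity regroups into $\delta_R(R_1)+\varphi(\mu_1)=0$ once one recognizes the induced bracket $[-,-]_R$ and the induced representation $\rho_R$ packaged inside $\delta_R$, together with the explicit form of $\varphi$; this is \eqref{deformation cocycle2}. In the same way the derivation identity yields $\delta_{CE}(d_1)+\Delta(\mu_1)=0$ via the formula for $\Delta$, which is \eqref{deformation cocycle3}, and the compatibility identity yields $\Delta(R_1)-\varphi(d_1)=0$.

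Finally I would assemble the four scalar identities into the single equation $\mathfrak{D}_{RLieDer}(\mu_1,R_1,d_1)=0$. Treating $(\mu_1,R_1,d_1)$ as the degree-two cochain $((\mu_1,R_1),(d_1,0))\in\mathfrak{C}^2_{RLieDer}(L;L)$ and expanding $\mathfrak{D}_{RLieDer}$ by its definition on degree-$2$ elements gives
\[
\Big((\delta_{CE}\mu_1,\,-\delta_R R_1-\varphi\mu_1),\ (\delta_{CE}d_1+\Delta\mu_1,\ \Delta R_1-\varphi d_1)\Big),
\]
where I have used $D_R(f,g)=(\delta_{CE}f,-\delta_R g-\varphi f)$ and $(-1)^2\Delta(\mu_1,R_1)=(\Delta\mu_1,\Delta R_1)$. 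By the four coboundary conditions just established, every slot vanishes, so the output is $((0,0),(0,0))$ and $(\mu_1,R_1,d_1)$ is a $2$-cocycle, as claimed.

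The hard part will be the second step: correctly collapsing the degree-one coefficients of the Reynolds and compatibility conditions into the operators $\delta_R$, $\varphi$ and $\Delta$. This requires carefully tracking how $R$ enters the induced structures $[-,-]_R$ and $\rho_R$, and matching the many terms of the expanded Reynolds identity against the explicit definition of $\varphi$; the signs and the placement of $R$ versus $R_1$ are where errors are most likely to creep in. The Jacobi piece is the classical deformation computation and the compatibility piece is short, so essentially all of the genuine verification concentrates on establishing \eqref{deformation cocycle2}.
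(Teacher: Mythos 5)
Your proposal is correct and follows essentially the same route as the paper: expand the four defining identities of the deformation, extract the $t^1$ coefficients to obtain equations \eqref{deformation eq1}--\eqref{deformation eq4}, recast them as $\delta_{CE}(\mu_1)=0$, $\delta_R(R_1)+\varphi\mu_1=0$, $\delta_{CE}(d_1)+\Delta\mu_1=0$, $\Delta R_1-\varphi d_1=0$, and assemble these into $\mathfrak{D}_{\mathrm{RLieDer}}(\mu_1,R_1,d_1)=0$. Your explicit identification of $(\mu_1,R_1,d_1)$ with the cochain $((\mu_1,R_1),(d_1,0))\in\mathfrak{C}^2_{RLieDer}(L;L)$ is in fact slightly more careful than the paper, which leaves that packaging implicit.
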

\begin{defn}
	The $\mathrm{2}$-cocycle $\mathrm{(\mu\mathrm{_1,R_1,d_1})}$ is called \textbf{the infinitesimal of the formal one-parameter
		deformation} $(\mathrm{\mu\mathrm{_t,R_t,d_t})}$ of the Reynolds LieDer pair $\mathrm{(L,\mu,\mathrm{R,d})}$.
\end{defn}
\begin{defn}
	Let $\mathrm{(\mu_\mathrm{t}, \mathrm{R_t},\mathrm{d_t})}$ and $\mathrm{(\mu_\mathrm{t}^\prime,
		\mathrm{R_t}^\prime,\mathrm{d_t}^\prime)}$ be two formal one-parameter deformations of a Reynolds LieDer pair
	$\mathrm{(L,\mu,\mathrm{R,d})}$. A formal isomorphism between these two deformations is a power series
	$\mathrm{\psi_\mathrm{t}=\sum _{\mathrm{i=0}}^{\infty}\psi_\mathrm{i} \mathrm{t^i}:
		L[\![\mathrm{t}]\!] \rightarrow L[\![\mathrm{t}]\!]}$, where $\mathrm{\psi_\mathrm{i}: L \rightarrow L}$ are linear maps
	and  $\mathrm{\psi_0=\mathrm{Id}_L}$ such that the following conditions are satisfied
	\begin{eqnarray}
		\mathrm{\psi_\mathrm{t} \circ \mu^\prime_\mathrm{t}}&=&\mathrm{\mu_\mathrm{t} \circ (\psi_\mathrm{t} \otimes \psi_\mathrm{t})},\\
		\mathrm{\psi_\mathrm{t} \circ R_\mathrm{t}^\prime}&=&\mathrm{R_{\mathrm{t}} \circ \psi_\mathrm{t}},\\
		\mathrm{\psi_\mathrm{t} \circ d_\mathrm{t}^\prime}&=&\mathrm{d_{\mathrm{t}} \circ \psi_\mathrm{t}}.
	\end{eqnarray}
\end{defn}
	Now expanding previous three equations and equating the coefficients of $\mathrm{t^n}$ from both sides we get
\begin{eqnarray*}
	\mathrm{\sum_{\substack {\mathrm{i+j}=n \\ \mathrm{i,j}\geq 0}}\psi _\mathrm{i}(\mu_\mathrm{j}^\prime(\mathrm{a,b}))}&
	=&\mathrm{ \sum_{\substack {\mathrm{i+j+k}=n \\ \mathrm{i,j,k}\geq 0}}\mu_\mathrm{i}(\psi_\mathrm{j}
		\mathrm{(a),\psi_k(b)}),~~ \mathrm{a,b} \in L}.\\
	\mathrm{\sum_{\substack {\mathrm{i+j}=n \\ \mathrm{i,j}\geq 0}}\psi_\mathrm{i} \circ R^\prime_\mathrm{j}}&
	=&\mathrm{\sum_{\substack {\mathrm{i+j}=n \\ \mathrm{i,j}\geq 0}} R_\mathrm{i} \circ \psi_\mathrm{j}},\\
	\mathrm{\sum_{\substack {\mathrm{i+j}=n \\ \mathrm{i,j}\geq 0}}\psi_\mathrm{i} \circ d^\prime_\mathrm{j}}&
	=&\mathrm{ \sum_{\substack {\mathrm{i+j}=n \\ \mathrm{i,j}\geq 0}} d_\mathrm{i} \circ \psi_\mathrm{j}}.
\end{eqnarray*}
Now putting $\mathrm{n=1}$ in the above equations, we get\\
\begin{eqnarray*}
	\mathrm{\mu^\prime_1(a,b)}&=&\mathrm{\mu_1(a,b)+\mu (\psi_1(a),b)+\mu (a,\psi_1 (b))-\psi_1(\mu (a,b)) ,~~ a,b \in L},\\
	\mathrm{R_1^\prime}&=&\mathrm{R_1+R \circ \psi_1-\psi _1 \circ R},\\
	\mathrm{d_1^\prime}&=&\mathrm{d_1+d \circ \psi_1-\psi _1 \circ d}.
\end{eqnarray*}
Therefore, we have
\[\mathrm{(\mu_1^\prime,R_1^\prime,d_1^\prime)-(\mu_1,R_1,d_1)
	=(\delta_{\mathrm{CE}}^1(\psi_1),-\phi^1(\psi_1),-\Delta^1(\psi_1))=\mathfrak{D}_\mathrm{RLieDer}(\psi_1)
	\in \mathfrak{C}^{1}_\mathrm{RLieDer}(L,L)}.\]
Hence, from the above discussion, we have the following theorem.

\begin{thm}
	The infinitesimals of two equivalent one-parameter formal deformation of a Reynolds LieDer pair
	$\mathrm{(L,\mu,R,d)}$ are in the same cohomology class.
\end{thm}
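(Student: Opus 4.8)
The plan is to realize the difference of the two infinitesimals as a $2$-coboundary in the complex $\mathrm{\big(\oplus_{n\geq0}\mathfrak{C}^n_{RLieDer}(L,L),\mathfrak{D}_{RLieDer}\big)}$, so that the two $2$-cocycles descend to the same class in $\mathrm{\mathcal{H}^2_{RLieDer}(L;L)}$. By Theorem \ref{infy-co} both $\mathrm{(\mu_1,R_1,d_1)}$ and $\mathrm{(\mu_1',R_1',d_1')}$ are $2$-cocycles, so it suffices to exhibit a $1$-cochain $\mathrm{\psi\in\mathfrak{C}^1_{RLieDer}(L,L)=C^1_R(L;L)}$ with $\mathrm{(\mu_1',R_1',d_1')-(\mu_1,R_1,d_1)=\mathfrak{D}_{RLieDer}(\psi)}$. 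The natural candidate is $\mathrm{\psi=\psi_1}$, the linear coefficient of the formal isomorphism $\mathrm{\psi_t=Id_L+\psi_1 t+\cdots}$ relating the two deformations.

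First I would take the three intertwining relations $\mathrm{\psi_t\circ\mu'_t=\mu_t\circ(\psi_t\otimes\psi_t)}$, $\mathrm{\psi_t\circ R'_t=R_t\circ\psi_t}$ and $\mathrm{\psi_t\circ d'_t=d_t\circ\psi_t}$, expand both sides as power series in $\mathrm{t}$, and read off the coefficient of $\mathrm{t^1}$ using $\mathrm{\psi_0=Id_L}$ and $\mathrm{(\mu_0,R_0,d_0)=(\mu,R,d)}$. This produces exactly the three formulas
\begin{align*}
\mathrm{\mu'_1(a,b)}&=\mathrm{\mu_1(a,b)+\mu(\psi_1(a),b)+\mu(a,\psi_1(b))-\psi_1(\mu(a,b))},\\
\mathrm{R'_1}&=\mathrm{R_1+R\circ\psi_1-\psi_1\circ R},\\
\mathrm{d'_1}&=\mathrm{d_1+d\circ\psi_1-\psi_1\circ d},
\end{align*}
already recorded in the discussion preceding the statement.

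Next I would compute $\mathrm{\mathfrak{D}_{RLieDer}(\psi_1)=(D_R(\psi_1),-\Delta\psi_1)}$ componentwise, using the degree-one expressions for the three operators specialized to the adjoint representation $\mathrm{(R_V,d_V)=(R,d)}$: namely $\mathrm{(\delta_{CE}\psi_1)(a,b)=\mu(\psi_1(a),b)+\mu(a,\psi_1(b))-\psi_1(\mu(a,b))}$, $\mathrm{(\varphi\psi_1)(a)=\psi_1(Ra)-R\psi_1(a)}$, and $\mathrm{\Delta\psi_1=\psi_1\circ d-d\circ\psi_1}$. Comparing with the three displayed formulas then yields
\begin{equation*}
\mathrm{(\mu'_1-\mu_1,\,R'_1-R_1,\,d'_1-d_1)=(\delta_{CE}\psi_1,\,-\varphi\psi_1,\,-\Delta\psi_1)=\mathfrak{D}_{RLieDer}(\psi_1)},
\end{equation*}
where on the right I use the identification of the infinitesimal $\mathrm{(\mu_1,R_1,d_1)}$ with $\mathrm{((\mu_1,R_1),(d_1,0))\in C^2_R(L;L)\times C^1_R(L;L)}$ and of $\mathrm{\psi_1}$ with $\mathrm{(\psi_1,0)\in C^1_R(L;L)}$. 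Hence the two infinitesimals differ by a coboundary and represent the same class in $\mathrm{\mathcal{H}^2_{RLieDer}(L;L)}$.

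The one error-prone step is this final matching: one must correctly rewrite the bracket using antisymmetry ($\mathrm{\mu(b,\psi_1(a))=-\mu(\psi_1(a),b)}$) and track the signs produced by the definitions of $\mathrm{D_R}$ and of $\mathrm{\mathfrak{D}_{RLieDer}}$ on $\mathrm{\mathfrak{C}^1}$, together with the convention that the $\mathrm{C^0_{Lie}(L_R;L)}$-slots of both $\mathrm{(\mu_1,R_1,d_1)}$ and $\mathrm{\psi_1}$ vanish. No deeper obstacle arises, since the cochain identity $\mathrm{\mathfrak{D}_{RLieDer}\circ\mathfrak{D}_{RLieDer}=0}$ and the commutation relations \eqref{commutative operator Rey Lie algebra} and \eqref{commutativity of linear maps} are already at our disposal.
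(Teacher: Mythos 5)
Your proposal is correct and follows essentially the same route as the paper: expand the three intertwining relations of the formal isomorphism, extract the coefficient of $\mathrm{t}$, and identify $\mathrm{(\mu_1',R_1',d_1')-(\mu_1,R_1,d_1)=(\delta_{CE}\psi_1,-\varphi\psi_1,-\Delta\psi_1)=\mathfrak{D}_{RLieDer}(\psi_1)}$ as a coboundary. Your explicit bookkeeping of the identifications $\mathrm{(\mu_1,R_1,d_1)\leftrightarrow((\mu_1,R_1),(d_1,0))}$ and $\mathrm{\psi_1\leftrightarrow(\psi_1,0)}$, and of the signs in $\mathrm{D_R}$ and $\mathrm{\mathfrak{D}_{RLieDer}}$, is in fact slightly more careful than the paper's own presentation.
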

\begin{defn}
	A Reynolds LieDer pair $\mathrm{(L,\mu,R,d)}$ is called \textbf{rigid} if every formal one-parameter deformation
	is trivial.
\end{defn}
\begin{thm}
	Let $\mathrm{(L, \mu,R,d)}$ be a Reynolds LieDer pair. Then $\mathrm{(L,\mu,R,d)}$ is rigid if
	$\mathrm{\mathcal{H}^2_\mathrm{RLieDer}(L,L)=0}.$
\end{thm}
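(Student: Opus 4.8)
The plan is to show that the vanishing of $\mathcal{H}^2_{\mathrm{RLieDer}}(L,L)$ forces every formal one-parameter deformation $(\mu_\mathrm{t},R_\mathrm{t},d_\mathrm{t})$ to be equivalent, via a formal isomorphism, to the trivial deformation $(\mu,R,d)$. The mechanism is the standard order-by-order trivialization: I will repeatedly produce a formal isomorphism that annihilates the lowest-order nontrivial coefficient of the deformation, thereby pushing the first nonzero term to ever higher order in $t$.

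First I would prove the order-$n$ analogue of Theorem~\ref{infy-co}. Suppose $(\mu_\mathrm{t},R_\mathrm{t},d_\mathrm{t})$ is a deformation whose lowest nontrivial term occurs at order $n\geq 1$, that is $(\mu_i,R_i,d_i)=(0,0,0)$ for all $1\leq i\leq n-1$. Equating the coefficients of $t^n$ in the four defining identities of a deformed Reynolds LieDer pair, every mixed term $\mu_i(\mu_j(-,-),-)$, $R_i(\mu_j(\cdots))$, and so on, with both indices strictly between $0$ and $n$ vanishes by hypothesis, so the surviving relations are exactly \eqref{deformation cocycle1}, \eqref{deformation cocycle2}, \eqref{deformation cocycle3} together with the remaining identity $\Delta R_n-\varphi d_n=0$, but now with $(\mu_1,R_1,d_1)$ replaced by $(\mu_n,R_n,d_n)$. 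Hence $\mathfrak{D}_{\mathrm{RLieDer}}(\mu_n,R_n,d_n)=0$, i.e. $(\mu_n,R_n,d_n)$ is a $2$-cocycle.

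Since $\mathcal{H}^2_{\mathrm{RLieDer}}(L,L)=0$, this cocycle is a coboundary, so there exists $\psi_n\in\mathfrak{C}^1_{\mathrm{RLieDer}}(L,L)$ with $\mathfrak{D}_{\mathrm{RLieDer}}(\psi_n)=-(\mu_n,R_n,d_n)$. I would then form the formal isomorphism $\psi_\mathrm{t}=\mathrm{Id}_L+\psi_n t^n$ (which is invertible over $K[\![t]\!]$ because $\psi_0=\mathrm{Id}_L$) and transport $(\mu_\mathrm{t},R_\mathrm{t},d_\mathrm{t})$ to an equivalent deformation $(\mu_\mathrm{t}',R_\mathrm{t}',d_\mathrm{t}')$. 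Reading off the coefficient of $t^n$ exactly as in the equivalence computation preceding Theorem~\ref{infy-co} (with the index $1$ replaced by $n$, all lower cross terms again vanishing) gives $(\mu_n',R_n',d_n')=(\mu_n,R_n,d_n)+\mathfrak{D}_{\mathrm{RLieDer}}(\psi_n)=(0,0,0)$, while all coefficients of order $<n$ remain zero. Thus the transformed deformation starts at order $\geq n+1$. Applying the same step to this new deformation and composing the successive isomorphisms $\mathrm{Id}_L+\psi_n t^n,\ \mathrm{Id}_L+\psi_{n+1}'t^{n+1},\dots$ yields a formal isomorphism $\Psi_\mathrm{t}$ whose reduction modulo $t^N$ stabilizes after finitely many factors for each $N$; the limit $\Psi_\mathrm{t}$ carries $(\mu_\mathrm{t},R_\mathrm{t},d_\mathrm{t})$ to $(\mu,R,d)$, so the deformation is trivial and $(L,\mu,R,d)$ is rigid.

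The step I expect to be the main obstacle is the first one: carefully re-deriving the cocycle identities at order $n$ and confirming that every cross term with intermediate indices genuinely cancels once the lower coefficients vanish, so that the four scalar relations collapse precisely to $\mathfrak{D}_{\mathrm{RLieDer}}(\mu_n,R_n,d_n)=0$. The two components of $\mathfrak{C}^2_{\mathrm{RLieDer}}$, namely the pieces arising from $D_R$ and from $\Delta$, must be tracked simultaneously, and the compatibility identity \eqref{commutative operator Rey Lie algebra} together with \eqref{commutativity of linear maps} is what makes them assemble into a single coboundary. By contrast, the convergence of the iterated isomorphism is a routine formal-power-series argument and poses no real difficulty.
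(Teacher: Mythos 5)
Your proposal is correct and follows essentially the same route as the paper: use the vanishing of $\mathcal{H}^2_{\mathrm{RLieDer}}(L,L)$ to write the lowest-order term of the deformation as a coboundary $\mathfrak{D}_{\mathrm{RLieDer}}(\psi_n)$, trivialize it with the formal isomorphism $\mathrm{Id}_L+\psi_n t^n$, and iterate. The only difference is one of completeness: the paper carries out just the order-one step and concludes with ``by repeating the arguments,'' whereas you spell out the order-$n$ cocycle identity (checking that cross terms with intermediate indices vanish), fix the sign so that the transported deformation genuinely starts at order $n+1$, and justify the $t$-adic convergence of the composed isomorphisms.
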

\begin{proof}
	Let $\mathrm{(\mu_t, R_t,d_t)}$ be a formal one-parameter deformation of the Reynolds LieDer pair
	$\mathrm{(L,\mu,R,d)}$. From Theorem \eqref{infy-co}, $\mathrm{(\mu_1,R_1,d_1)}$ is a $\mathrm{2}$-cocycle
	and as $\mathrm{\mathcal{H}^2_\mathrm{RLieDer}(L,L)=0}$, thus, there exists a $1$-cochain $\psi_1\in\mathfrak{C}^1_\mathrm{RLieDer}\mathrm{(L,L)}$ such that
	\begin{equation}\label{eqt deformation}
		(\mathrm{\mu_1,R_1,d_1})=\mathfrak{D}_\mathrm{RLieDer}(\psi_1).
	\end{equation}
	Then setting $\mathrm{\psi_t=Id + \psi_1t}$, we have a deformation $(\bar{\mu}_t,\bar{R}_t,\bar{d}_t))$, where
	\begin{eqnarray*}
		\mathrm{\bar{\mu}_t(a,b)}&=&\mathrm{\big(\psi_t^{-1} \circ \mu_t \circ (\psi_t \circ \psi_t)\big)(a,b)},\\
		\mathrm{\bar{R}_t(a)}&=&\mathrm{\big(\psi_t^{-1} \circ R_t \circ \psi_t\big)(a)},\\
		\mathrm{ \bar{d}_t(a)}&=&\mathrm{\big(\psi_t^{-1} \circ d_t \circ \psi_t\big)(a)}.
	\end{eqnarray*}
	Thus, $(\mathrm{\bar{\mu}_t,\bar{R}_t,\bar{d}_t})$ is equivalent to $(\mathrm{\mu_t,R_t,d_t})$.\\
	Moreover, we have
	\begin{eqnarray*}
		\mathrm{\bar{\mu}_t(a,b)}&=& (\mathrm{Id - \psi_1t+\psi_1^2t^2+\cdots+(-1)^i\psi_1^it^i+\cdots) 
			(\mu_t(a+\psi_1(a)t,b+\psi_1(b)t)},  \\
		\mathrm{\bar{R}_t(a)}&=& (\mathrm{Id - \psi_1t+\psi_1^2t^2+\cdots+(-1)^i\psi_1^it^i+\cdots) (R_t(a+\psi_1(a)t))},	\\
		\mathrm{\bar{d}_t(a)}&=& (\mathrm{Id - \psi_1t+\psi_1^2t^2+\cdots+(-1)^i\psi_1^it^i+\cdots) (d_t(a+\psi_1(a)t))}.
	\end{eqnarray*}
	
	By \eqref{eqt deformation}, we have
	\begin{eqnarray*}
		\mathrm{\bar{\mu}_t(a,b)}&=& \mathrm{\mu(a,b) +\bar{\mu}_2(a,b)t^2+\cdots} ,  \\
		\mathrm{\bar{R}_t(a)}&=&\mathrm{ R(a) +\bar{R}_2(a)t^2+\cdots},\\
		\mathrm{\bar{d}_t(a)}&=& \mathrm{d +\bar{d}_2(a)t^2+\cdots} .
	\end{eqnarray*}
	
	Finaly, by repeating
	the arguments, we can
	show that $\mathrm{(\mu_t,R_t, d_t)}$ is equivalent to the trivial deformation. Hence, $\mathrm{(L,\mu,R,d)}$ is rigid.
\end{proof}
\section{Abelian extension of a Reynolds LieDer pairs}\label{section 5}
\def\theequation{\arabic{section}.\arabic{equation}}
\setcounter{equation} {0}
Since abelian extensions of Lie algebras can be classified by the second cohomology group, this section aims to generalize that concept to the structure of Reynolds LieDer pairs. We investigate this framework to show that abelian extensions are controlled by the second cohomology group.

Let $\mathrm{V}$ be any vector space. We can always define a bilinear product on $\mathrm{V}$ by $\mathrm{[u,v]_V=0}$, i.e., %$\mathrm{\mu_V (u,v)=0}$ 
for
all $\mathrm{u,v \in V}$. If $\mathrm{R_V}$ and $\mathrm{d_V}$ be two linear maps on $\mathrm{V}$, then $\mathrm{(V,\mu_V,{R_V},d_V)}$ is an abelian Reynolds LieDer pairs. Now we introduce the definition of abelian extension of Reynolds LieDer pair. In the sequel we denote $\mathrm{(V,[-,-]_V)=\mathbb{V}}$
\begin{defn}
	Let $\mathrm{(L,[-,-],R,d)}$ and $\mathrm{(V,[-,-]_V,R_V,d_V)}$ be two Reynolds LieDer pair. Now a Reynolds LieDer pair $\mathrm{(\hat{L},[-,-]_{\hat{L}},{\hat{R}},\hat{d})}$ is called an extension of
$\mathrm{(L,[-,-],R,d)}$ by $\mathrm{(V,[-,-]_V,R_V,d_V)}$ if there exists a short exact sequence 
	of morphisms of Reynolds LieDer pair
	$$\begin{CD}
		0@>>> \mathrm{(\mathbb{V},d_V)} @>\mathrm{i} >> \mathrm{(\hat{\mathbb{L}},\hat{d})} @>\mathrm{p} >> \mathrm{(\mathbb{L},d)} @>>>\mathrm{0}\\
		@. @V {\mathrm{R_V}} VV @V \hat{\mathrm{R}} VV @V \mathrm{R} VV @.\\
		0@>>> \mathrm{(\mathbb{V},d_V)} @>\mathrm{i} >> \mathrm{(\hat{\mathbb{L}},\hat{d})} @>\mathrm{p} >> \mathrm{(\mathbb{L},d)} @>>>0
	\end{CD}$$
	for all $\mathrm{u,v \in V}$ and $\mathrm{\mathbb{\hat{L}}=(\hat{L},[-,-]_{\hat{L}})}$.
\end{defn}
An extension   $\mathrm{(\hat{L},[-,-]_{\hat{L}},{\hat{R}},\hat{d})}$ of the Reynolds LieDer pair
$\mathrm{(L,[-,-],R,d)}$ by $\mathrm{(V,[-,-]_\mathrm{V},R_V,d_V)}$ is called abelian if the Lie algebra $\mathrm{(V,[-,-]_\mathrm{V})}$ is abelian and $\rho$ is trivial.\\
A section of an abelian extension $\mathrm{(\hat{L},[-,-]_{\hat{L}},{\hat{R}},\hat{d})}$ of the Reynolds LieDer pair
$\mathrm{(L,[-,-],R,d)}$ by $\mathrm{(V,[-,-]_\mathrm{V},R_V,d_V)}$ consists of a linear map $\mathrm{s:L\rightarrow \hat{L}}$ such that $\mathrm{p\circ s=\mathrm{Id}}$. \\
In the following, we always assume that $\mathrm{(\hat{\mathbb{L}},\hat{R},\hat{d})}$ is an abelian extension of the Reynolds LieDer pair
$\mathrm{(L,[-,-],R,d)}$ by $\mathrm{(V,[-,-]_\mathrm{V},R_V,d_V)}$ and $\mathrm{s}$ is a section of it.
For all $\mathrm{a\in L}$, $\mathrm{u\in V}$ define a linear map $\mathrm{\rho:L\rightarrow \mathrm{gl(V)}}$ by

\begin{equation}\label{extension1}
	\mathrm{\rho(a)u:=[s(a),u]_{\hat{L}}}
\end{equation}
\begin{prop}
	With the above notations, $\mathrm{(V,\rho,R_V,d_V)}$ is a representation of the Reynolds LieDer 
	pair $\mathrm{(\mathbb{L},R,d)}$.
\end{prop}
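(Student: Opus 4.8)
The plan is to transport the defining identities of a representation down from $\hat{\mathbb{L}}$ using the section $s$, treating $V$ throughout as the abelian ideal $i(V)=\ker p$ of $\hat{\mathbb{L}}$ (so I write $u$ for $i(u)$ and $[-,-]$ for $[-,-]_{\hat L}$). First I would record the three ``section defects''
$$\chi(a,b):=[s(a),s(b)]-s([a,b]),\quad \xi(a):=\hat R(s(a))-s(Ra),\quad \zeta(a):=\hat d(s(a))-s(da).$$
Because $p$ is a morphism of Reynolds LieDer pairs and $p\circ s=\mathrm{Id}$, applying $p$ to each shows $\chi(a,b),\xi(a),\zeta(a)\in\ker p=V$; likewise $\hat R\circ i=i\circ R_V$ and $\hat d\circ i=i\circ d_V$ give $\hat R|_V=R_V$ and $\hat d|_V=d_V$. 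Since the extension is abelian, $[V,V]=0$, so all three defects are annihilated by bracketing against any element of $V$. This single mechanism is what makes every verification go through, and it also shows $\rho$ is independent of the chosen section.

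Next I would check that $(V;\rho)$ is a representation of the Lie algebra $\mathbb{L}$. Well-definedness, $\rho(a)u=[s(a),u]\in V$, is immediate since $V$ is an ideal. For the homomorphism property I would expand $\rho(a)\rho(b)u-\rho(b)\rho(a)u=[s(a),[s(b),u]]-[s(b),[s(a),u]]$, apply the Jacobi identity in $\hat L$ to obtain $[[s(a),s(b)],u]$, and then substitute $[s(a),s(b)]=s([a,b])+\chi(a,b)$; the term $[\chi(a,b),u]$ vanishes by abelian-ness, leaving $\rho([a,b])u$.

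I would then verify that $(V;\rho,R_V)$ is a representation of the Reynolds Lie algebra $(\mathbb{L},R)$ by applying the Reynolds identity \eqref{Reynolds operator} of $\hat R$ to the pair $(s(x),u)$. Writing $\hat R s(x)=s(Rx)+\xi(x)$ and $\hat R u=R_V(u)$ and discarding every bracket involving $\xi(x)\in V$, the left-hand side collapses to $\rho(Rx)R_V(u)$, while the argument of $\hat R$ on the right collapses to $\rho(Rx)u+\rho(x)R_V(u)-\rho(Rx)R_V(u)\in V$, on which $\hat R$ acts as $R_V$; this is precisely \eqref{eqt representation Rey Lie algebra}. Finally, for the derivation conditions, the identity \eqref{rep ReyLieDer pair 2}, $R_V\circ d_V=d_V\circ R_V$, is inherited from the Reynolds LieDer structure on $V$, and the remaining compatibility is obtained by applying the derivation rule $\hat d[s(x),u]=[\hat d s(x),u]+[s(x),\hat d u]$, restricting to $V$, and using $\hat d s(x)=s(dx)+\zeta(x)$ with $[\zeta(x),u]=0$.

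I expect the Reynolds step to be the main obstacle. Unlike the Lie and derivation identities, which are linear and each kill exactly one defect term, the Reynolds identity is the nonlinear three-term relation, so the care lies in tracking which of the several brackets survive after the substitution $\hat R s(x)=s(Rx)+\xi(x)$ and in confirming that every surviving term reassembles into the prescribed right-hand side of \eqref{eqt representation Rey Lie algebra}.
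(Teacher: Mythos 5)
Your proposal is correct and follows essentially the same route as the paper's proof: the paper likewise defines $\rho(a)u=[s(a),u]_{\hat{L}}$, observes that the defects $s([a,b])-[s(a),s(b)]_{\hat{L}}$, $s(R(a))-\hat{R}(s(a))$ and $s(d(a))-\hat{d}(s(a))$ lie in $V$ and therefore bracket to zero against $V$, and then transports the Jacobi identity, the Reynolds identity of $\hat{R}$, and the derivation rule of $\hat{d}$ down to $V$ exactly as you outline. The Reynolds step you flagged as the main obstacle is handled in the paper by the same collapse you describe (the argument of $\hat{R}$ lies in $V$, where $\hat{R}$ acts as $R_V$), so there is no gap.
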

\begin{proof}
	Let $\mathrm{u,v\in V}$ and $\mathrm{a\in L}$.
	\begin{align*}
		\mathrm{\rho([a,b])u}&\mathrm{=[s([a,b]),u]_{\hat{L}}}\\
		&\mathrm{=[[s(a),s(b)]_{\hat{L}}+s([a,b])-[s(a),s(b)]_{\hat{L}},u]_{\hat{L}}}\\
		&\mathrm{=[[s(a),s(b)]_{\hat{L}},u]_{\hat{L}}}\\
		&\mathrm{=[[s(a),u]_{\hat{L}},s(b)]_{\hat{L}}+[s(a),[s(b),u]_{\hat{L}}]_{\hat{L}}}\\
		&\mathrm{=\rho(a)(\rho(b)u)-\rho(b)(\rho(a)u)}.
	\end{align*}
	By $\mathrm{p(s(R(a))-\hat{R}(s(a)))=R(a)-R(p(s(a)))=0}$\\
	which implies
	\begin{equation*}
		\mathrm{s(R(a))-\hat{R}(s(a))\in V}.
	\end{equation*}
	Also we have
	\begin{align*}
		\mathrm{[\hat{R}(s(a)),R_V(u)]_{\hat{L}}}&\mathrm{=[\hat{R}(s(a)),\hat{R}(u)]_{\hat{L}}}\\
		&\mathrm{=\hat{R}\Big([\hat{R}(s(a)),u]_{\hat{L}}+[s(a),\hat{R}(u)]_{\hat{L}}-[\hat{R}(s(a)),\hat{R}(u)]_{\hat{L}}}\Big)\\
		&\mathrm{=\hat{R}\Big([\hat{R}(s(a))+s(R(a))-s(R(a)),u]_{\hat{L}}+[s(a),\hat{R}(u)]_{\hat{L}}-[\hat{R}(s(a)),\hat{R}(u)]_{\hat{L}}}\Big)\\
		&\mathrm{=\Big([s(R(a)),u]_{\wedge}+[s(a),\hat{R}(u)]_{\hat{L}}-[\hat{R}(s(a)),\hat{R}(u)]_{\hat{L}}\Big)}
	\end{align*}
	And
	\begin{align*}
		\mathrm{[s(R(a)),R_V(u)]_{\hat{L}}}&\mathrm{=[\hat{R}(s(a))+s(R(a))-\hat{R}(s(a)),\hat{R}(u)]_{\hat{L}}}\\
		&\mathrm{=[\hat{R}(s(a)),\hat{R}(u)]_{\hat{L}}}.
	\end{align*}
	Then we have
	\begin{align*}
		\mathrm{\rho(R(a))(R_V(u))}&\mathrm{=[s(R(a)),R_V(u)]_{\hat{L}}}\\
		&\mathrm{=[\hat{R}(s(a)),\hat{R}(u)]_{\hat{L}}}\\
		&\mathrm{=\hat{R}\Big([s(R(a)),u]_{\wedge}+[s(a),\hat{R}(u)]_{\hat{L}}-[s(R(a)),R_V(u)]_{\hat{L}}\Big)}\\
		&\mathrm{=R_V(\rho(R(a))u+\rho(a)R_V(u)-\rho(R(a))(R_V(u)))}
	\end{align*}
	%%%%%%%%%%%%%%%%%%%%%%%%%%%%%%%%%%%%%%%%%%%%%%%%%%%%%%%%%
	Thanks to $\mathrm{s(da)-\hat{d}s(a)\in V}$ we have the following
	\begin{align*}
		&\mathrm{[s(d(a))-\hat{d}(s(a)),u]_{\hat{L}}=0}\\
		&\mathrm{[s(d(a)),u]_{\hat{L}}-[\hat{d}(s(a)),u]_{\hat{L}}=0}\\
		&\mathrm{[s(d(a)),u]_{\hat{L}}+[s(a),d_V(u)]_{\hat{L}}-d_V([s(a),u])=0}\\
		&\mathrm{\rho(da)u+\rho(a)(d_V(u))-d_V(\rho(a)u)=0}.
	\end{align*}
	This complete the proof.
\end{proof}
For any $\mathrm{a,b\in L}$ and $\mathrm{u\in V}$, define
$\mathrm{\Theta:\wedge^2L\rightarrow V}$, $\mathrm{\chi:L\rightarrow V}$ and $\mathrm{\xi:L\rightarrow V}$ as follows
\begin{eqnarray*}
	\mathrm{\Theta(a,b)}&=&\mathrm{[s(a),s(b)]_{\hat{L}}-s([a,b])},\\
	\mathrm{\chi(a)}&=&\mathrm{\hat{d}(s(a))-s(d(a))},\\
	\mathrm{\xi(a)}&=&\mathrm{\hat{R}(s(a))-s(R(a)),\quad \forall a,b\in A}.
\end{eqnarray*}
These linear maps lead to define \\
$\mathrm{R_\xi:L\oplus V\rightarrow L\oplus V}$ and $\mathrm{d_\chi:L\oplus V\rightarrow L\oplus V}$ by
\begin{eqnarray*}
	\mathrm{R_\xi(a+u)}&=&\mathrm{R(a)+R_V(u)+\xi(a)},\\
	\mathrm{d_\chi(a)}&=&\mathrm{d(a)+d_V(u)+\chi(a)}.
\end{eqnarray*}
\begin{thm}\label{theorem ext}
	With the above notations, the quadruple $\mathrm{(L\oplus V,[-,-]_\Theta,R_\xi,d_\chi)}$ 
	is a Reynolds LieDer pair if and only if $\mathrm{(\Theta,\xi,\chi)}$ is a $\mathrm{2}$-cocycle of the Reynolds LieDer pair $\mathrm{(\mathbb{L},R,d)}$ with coefficients in the representation $\mathrm{(V;\rho,R_V,d_V)}$ where
	\begin{equation*}
		\mathrm{[a+u,b+v]_\Theta=[a,b]+\Theta(a,b)+\rho(a)v-\rho(b)u,\quad \forall a,b\in L,\quad \forall u,v\in V},
	\end{equation*}.
\end{thm}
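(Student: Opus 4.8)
The strategy is to prove both implications at once by unravelling, one by one, the four axioms that make $\mathrm{(L\oplus V,[-,-]_\Theta,R_\xi,d_\chi)}$ a Reynolds LieDer pair, and matching each of them with one of the four components of the cocycle equation $\mathrm{\mathfrak{D}_{RLieDer}(\Theta,\xi,\chi)=0}$. First I record the precise meaning of the latter. Viewing $\mathrm{(\Theta,\xi,\chi)}$ as the element $\mathrm{\big((\Theta,\xi),(\chi,0)\big)}$ of $\mathrm{\mathfrak{C}^2_{RLieDer}(L;V)=C_R^2(L;V)\times C_R^1(L;V)}$, where $\mathrm{\Theta\in C^2_{Lie}(L;V)}$, $\mathrm{\xi\in C^1_{Lie}(L_R;V)}$ and $\mathrm{\chi\in C^1_{Lie}(L;V)}$, the definition of $\mathrm{\mathfrak{D}_{RLieDer}}$ for $\mathrm{n=2}$ together with \eqref{coboundary map of Rey Lie algera} gives
\begin{equation*}
	\mathrm{\mathfrak{D}_{RLieDer}(\Theta,\xi,\chi)=\Big((\delta_{CE}\Theta,\,-\delta_R\xi-\varphi\Theta),\,(\delta_{CE}\chi+\Delta\Theta,\,\Delta\xi-\varphi\chi)\Big)}.
\end{equation*}
Hence $\mathrm{(\Theta,\xi,\chi)}$ is a $2$-cocycle exactly when the four equations $\mathrm{\delta_{CE}\Theta=0}$, $\mathrm{\delta_R\xi+\varphi\Theta=0}$, $\mathrm{\delta_{CE}\chi+\Delta\Theta=0}$ and $\mathrm{\Delta\xi-\varphi\chi=0}$ hold, and the whole proof reduces to identifying these with the four axioms.

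I would begin with the bracket. Because $\mathrm{\rho}$ is already a representation of $\mathrm{(\mathbb{L},R,d)}$ by the preceding proposition and $\mathrm{\Theta}$ is skew-symmetric, $\mathrm{[-,-]_\Theta}$ is skew-symmetric, and expanding the Jacobi identity of $\mathrm{[a+u,b+v]_\Theta}$ splits into three layers: the $\mathrm{L}$-valued terms vanish by the Jacobi identity of $\mathrm{\mathbb{L}}$, the terms linear in the $\mathrm{V}$-components vanish because $\mathrm{\rho}$ is a representation, and the remaining $\mathrm{V}$-valued terms are precisely $\mathrm{(\delta_{CE}\Theta)(a,b,c)}$. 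Thus the first axiom is equivalent to $\mathrm{\delta_{CE}\Theta=0}$. Next, inserting $\mathrm{R_\xi(a+u)=Ra+R_Vu+\xi(a)}$ into the Reynolds identity \eqref{Reynolds operator} on $\mathrm{L\oplus V}$, the $\mathrm{L}$-component reproduces that $\mathrm{R}$ is a Reynolds operator, the terms involving only $\mathrm{\rho,R_V}$ cancel by \eqref{eqt representation Rey Lie algebra}, and after substituting the defining formula of $\mathrm{\varphi}$ on a $2$-cochain and of $\mathrm{\delta_R}$ on a $1$-cochain the surviving $\mathrm{V}$-valued part collapses to $\mathrm{\delta_R\xi+\varphi\Theta=0}$. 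Similarly, feeding $\mathrm{d_\chi(a+u)=da+d_Vu+\chi(a)}$ into $\mathrm{d_\chi([x,y]_\Theta)=[d_\chi x,y]_\Theta+[x,d_\chi y]_\Theta}$ and using that $\mathrm{d}$ is a derivation of $\mathrm{\mathbb{L}}$ together with the representation relation for $\mathrm{d_V}$ yields $\mathrm{\delta_{CE}\chi+\Delta\Theta=0}$, while expanding $\mathrm{R_\xi\circ d_\chi=d_\chi\circ R_\xi}$ and cancelling the parts that vanish by \eqref{definition of ReyLieDer} and \eqref{rep ReyLieDer pair 2} leaves exactly $\mathrm{\Delta\xi-\varphi\chi=0}$.

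I expect the Reynolds-operator axiom to be the principal obstacle: it is the only nonlinear condition, so the interaction between the quadratic Reynolds term and the twisting cochain $\mathrm{\xi}$ produces the largest number of terms, and recognising the $\mathrm{R_V}$-weighted combination that constitutes $\mathrm{\varphi\Theta}$ (rather than some other reshuffling) requires the most careful bookkeeping; the derivation and compatibility axioms, by contrast, are linear and fall out once the representation relations are invoked. Having established the four equivalences, the quadruple $\mathrm{(L\oplus V,[-,-]_\Theta,R_\xi,d_\chi)}$ satisfies all four Reynolds LieDer pair axioms if and only if every component of $\mathrm{\mathfrak{D}_{RLieDer}(\Theta,\xi,\chi)}$ vanishes, that is, if and only if $\mathrm{(\Theta,\xi,\chi)\in\mathcal{Z}^2_{RLieDer}(L;V)}$, which is the claim.
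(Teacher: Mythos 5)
Your proposal is correct and takes essentially the same route as the paper: both arguments expand the four Reynolds LieDer axioms on $\mathrm{L\oplus V}$ (Jacobi identity for $\mathrm{[-,-]_\Theta}$, the Reynolds identity for $\mathrm{R_\xi}$, the derivation property of $\mathrm{d_\chi}$, and $\mathrm{R_\xi\circ d_\chi=d_\chi\circ R_\xi}$) and identify them, respectively, with the four components $\mathrm{\delta_{CE}\Theta=0}$, $\mathrm{\delta_R\xi+\varphi\Theta=0}$, $\mathrm{\delta_{CE}\chi+\Delta\Theta=0}$ and $\mathrm{\Delta\xi-\varphi\chi=0}$ of the cocycle condition. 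The only cosmetic difference is that you spell out the embedding $\mathrm{(\Theta,\xi,\chi)\mapsto\big((\Theta,\xi),(\chi,0)\big)}$ into $\mathrm{\mathfrak{C}^2_{RLieDer}(L;V)}$, which the paper leaves implicit.
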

\begin{proof}
	Let $\mathrm{a,b,c\in L, u,v,w\in V}$, if $\mathrm{(L\oplus V,[-,-]_\Theta,R_\xi,d_\chi)}$ is a Reynolds LieDer pair, which means that $\mathrm{(L\oplus V,[-,-]_\Theta)}$ is a Lie algebra, it includes that 
	\begin{equation*}
		[[a+u,b+v]_\Theta,c+w]_\Theta+c.p=0,
	\end{equation*}
	then 
	\begin{equation}\label{1}
		\mathrm{-\rho(a)\Theta(b,c)+\rho(b)\Theta(a,c)-\rho(c)\Theta(a,b)+\Theta([a,b],c)-\Theta([a,c],b)+\Theta([b,c],a)=0=(\delta_{CE}\Theta)(a,b,c)}.
	\end{equation}
	Since $\mathrm{R_\xi}$ is a Reynolds operator on $\mathrm{(L\oplus V),[-,-]_\Theta)}$, we have 
	\begin{eqnarray*}
		\mathrm{[R_\xi(a+u),R_\xi(b+v)]_\Theta-R_\xi([R_\xi(a+u),b+v]_\Theta+[a+u,_\xi(b+v)]_\Theta-[R_\xi(a+u),R_\xi(b+v)]_\Theta)=0},
	\end{eqnarray*}
	implies
	\begin{align}\label{2}
		\begin{split}
		&\mathrm{\Theta(R(a),R(b))-R_V(\Theta(R(a),b)+\Theta(a,R(b)))+R_V\Theta(R(a),R(b))}\\
		&\mathrm{+\rho(R(a))\xi(b)-\rho(R(b))\xi(a)+R_V(\rho(R(a))\xi(b)-\rho(R(b))\xi(a))}\\
		&\mathrm{-R_V(\rho(a)\xi(b)-\rho(b)\xi(a))-\xi([a,R(b)]+[R(a),b]-[R(a),R(b)])}
		=0=\mathrm{\varphi\Theta(a,b)+\delta_R\xi(a,b)}.
	\end{split}
	\end{align}
	Since $\mathrm{d_\chi}$ is a derivation on $(\mathrm{L\oplus V,[-,-]_\Theta})$, we have
	\begin{eqnarray*}
		\mathrm{d_\chi([a+u,b+v]_\Theta)-[d_\chi(a+u),b+v]_\Theta-[a+u,d_\chi(b+v)]_\Theta=0},
	\end{eqnarray*}
	it implies 
	\begin{eqnarray}\label{3}
		\mathrm{d_V\Theta([a,b])+\chi[a,b]-\Theta(d(a),b)+\rho(b)\chi(a)-\Theta(a,d(b))-\rho(a)\chi(b)=0=-(\Delta\Theta)(a,b)-(\delta_{CE}\chi)(a,b)}.
	\end{eqnarray}
	Since $\mathrm{R_\xi}$ commute with $\mathrm{d_\chi}$ then we have 
	\begin{eqnarray*}
		\mathrm{R_\xi\circ d_\chi(a+u)-d_\chi \circ R_\xi(a+u)=0},
	\end{eqnarray*}
	which implies 
	\begin{eqnarray}\label{4}
		\mathrm{-\xi(R(a))+R_V(\chi(a))+\xi(d(a))-d_V(\xi(a))=0=(\Delta\xi)(a)-(\varphi\chi)(a)}.
	\end{eqnarray}
With the above equations \eqref{1}, \eqref{2}, \eqref{3} and \eqref{4}, if $\mathrm{(L\oplus V,[-,-]_\theta,R_\xi,d_\chi)}$ is a Reynolds LieDer pair means that $\mathrm{(\Theta,\xi,\chi)}$ is a $2$-cocycle of the Reynolds LieDer pair $\mathrm{(L,[-,-],R,d)}$ with coefficients in the representation $\mathrm{(V;\rho,R_V,d_V)}$.\\

	For the second sense, if $\mathrm{(\Theta,\xi,\chi)\in\mathfrak{C}^2_\mathrm{RlieDer}(L;V)}$ is a 
	$\mathrm{2}$-cocycle if and only if
	\begin{equation*}
		\mathrm{\Big(\delta_\mathrm{CE}(\Theta),-\delta_\mathrm{R}(\xi)-\phi(\Theta),  
			\delta_\mathrm{CE}(\chi)+\Delta(\Theta),\Delta(\xi)-\phi(\chi)\Big)=0}.
	\end{equation*}
	This means that equations \eqref{1}, \eqref{2}, \eqref{3}, \eqref{4} are satisfied. \\
	Thus $\mathrm{\Big(\delta_\mathrm{CE}(\Theta),-\delta_\mathrm{R}(\xi)-\phi(\Theta),  
		\delta_\mathrm{CE}(\chi)+\Delta(\Theta),\Delta(\xi)-\phi(\chi)\Big)=0}$ 
	if and only if $\mathrm{(L\oplus V,[-,-]_\Theta,R_\xi,d_\chi)}$ is a Reynolds LieDer pair. This complete the proof.	
\end{proof}
\section{Extensions of a pair of derivations} \label{section 6}
In this section, using a central extension $\mathrm{(\hat{\mathbb{L}},\hat{R})}$ of Reynolds Lie algebra $\mathrm{(\mathbb{L},R)}$ by an abelian Reynolds Lie algebra $\mathrm{(\mathcal{V},R_V)}$ and a couple of derivations $\mathrm{(d_V,d)\in Der(V)\times Der(L)}$, we define a cohomology class $\mathrm{[\hat{Ob}_{(d_V,d)}]\in \mathcal{H}^2_R(L;V)}$. 
We show that $\mathrm{(d_V,d)}$ is extensible if and only if the cohomology class $\mathrm{[\hat{Ob}_{(d_V,d)}]}$ is trivial, then we call $\mathrm{[\hat{Ob}_{(d_V,d)}]}$ the obstruction class of $\mathrm{(d_V,d)}$ being extensible.\\
Recall first the definition of a central extension $\mathrm{(\hat{L},\hat{R})}$ of $\mathrm{(L,R)}$ by $\mathrm{(V,R_V)}$.
\begin{defn}
	Let $\mathrm{\mathrm{(\mathcal{V},R_V)}}$ be an abelian Reynolds Lie algebra and $(\mathbb{L},R)$ a Reynolds Lie algebra. An exact sequence of Reynolds Lie algebra morphisms
	$$\begin{CD}
		0@>>> \mathrm{(\mathcal{V},R_V)} @>\mathrm{i} >> \mathrm{(\hat{\mathbb{L}},\hat{R})} @>\mathrm{p} >> \mathrm{(\mathbb{L},R)} @>>>\mathrm{0}
	\end{CD}$$
	is called a central extension of $\mathrm{(\mathbb{L},R)}$ by $\mathrm{\mathrm{(\mathcal{V},R_V)}}$ if $\mathrm{[V,\hat{L}]_{\hat{L}}=0}$.
\end{defn}
\begin{defn}
	Let ~~$\begin{CD}
		0@>>> \mathrm{(\mathcal{V},R_V)} @>\mathrm{i} >> \mathrm{(\hat{\mathbb{L}},\hat{R})} @>\mathrm{p} >> \mathrm{(\mathbb{L},R)} @>>>\mathrm{0}
	\end{CD}$~~ 
	be a central extension of Reynolds Lie algebras. A couple of derivations $\mathrm{(d_V,d)\in Der(V)\times Der(L)}$ is said to be extensible if there exists a derivation $\mathrm{\hat{d}\in Der(\hat{L})}$ such that we have the following exact sequence of Reynolds LieDer pair morphisms 
	$$\begin{CD}
		0@>>> \mathrm{(\mathcal{V},R_V,d_V)} @>\mathrm{i} >> \mathrm{(\hat{\mathbb{L}},\hat{R},d)} @>\mathrm{p} >> \mathrm{(\mathbb{L},R,d)} @>>>\mathrm{0}
	\end{CD}$$
	which is equivalent to $\mathrm{(\hat{\mathbb{L}},\hat{R},d)}$ is a central extension of $\mathrm{(\mathbb{L},R,d)}$ by $\mathrm{(\mathcal{V},R_V,d_V)}$.
\end{defn}
Let $\mathrm{s:L\rightarrow \hat{L}}$ be an arbitrary section of the central extension of Reynolds Lie algebra. Then for all elements of $\mathrm{\hat{L}}$ can be written as $\mathrm{s(a)+u}$ where $\mathrm{a\in L}$ and $\mathrm{u\in V}$. Define $\mathrm{\psi:\wedge^2L\rightarrow V}$ and $\mathrm{\xi:L\rightarrow V}$ by 
\begin{align*}
	\mathrm{\psi(a,b)}&=\mathrm{[s(a),s(b)]_{\hat{L}}-s[a,b]},\\
	\mathrm{\xi(a)}&=\mathrm{\hat{R}(s(a))-s(R(a))}.
\end{align*} 
For any couple $\mathrm{(d_V,d)\in Der(V)\times Der(L)}$, define a 2-cochain map $\mathrm{\hat{Ob}_{(d_V,d)}}=\Big(\mathrm{Ob^{2,\hat{\mathcal{L}}}_{(d_V,d)}},\mathrm{Ob^{1,\hat{\mathcal{L}}}_{(d_V,d)}}\Big)\in \mathrm{C^2_R(L;V)}$ by 
\begin{align}\label{Ob1}
	\begin{split}
		\mathrm{Ob^{2,\hat{\mathcal{L}}}_{(d_V,d)}(a,b)}&=\mathrm{d_V(\psi(a,b)-\psi(d(a),b)-\psi(a,d(b)))}\\
		\mathrm{Ob^{1,\hat{\mathcal{L}}}_{(d_V,d)}(a)}&=\mathrm{d_V(\xi(a))-\xi(d(a)),\quad \forall a,b\in L},
	\end{split}
\end{align}
where $\mathrm{\hat{\mathcal{L}}=(\hat{\mathbb{L}},\hat{R})}$
\begin{prop}
	Let ~~$\begin{CD}
		0@>>> \mathrm{(\mathcal{V},R_V)} @>\mathrm{i} >> \mathrm{(\hat{\mathbb{L}},\hat{R})} @>\mathrm{p} >> \mathrm{(\mathbb{L},R)} @>>>\mathrm{0}
	\end{CD}$ ~~ be a central extension of Reynolds Lie algebras. For any pair $\mathrm{(d_V,d)\in Der(V)\times Der(L)}$, the 2-cochain $\mathrm{Ob^{2,\hat{\mathcal{L}}}_{(d_V,d)}\in C_R^2(L;V)}$ defined by equations \eqref{Ob1} is a 2-cocycle of the Reynolds Lie algebra $(\mathbb{L},R)$ with coefficients in the trivial representation.
\end{prop}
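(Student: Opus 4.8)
The plan is to exhibit $\hat{Ob}_{(d_V,d)}=\bigl(Ob^{2,\hat{\mathcal L}}_{(d_V,d)},Ob^{1,\hat{\mathcal L}}_{(d_V,d)}\bigr)$ as the image, up to sign, of a genuine $2$-cocycle under the cochain map $\Delta$ of Section~\ref{section 3}; the cocycle property will then be a formal consequence of \eqref{coboundary 3}. Throughout I use that in a central extension each element of $\hat L$ is uniquely $s(a)+u$ with $a\in L$, $u\in V$, that $p\circ\hat R=R\circ p$ forces $\psi(a,b),\xi(a)\in\ker p=V$, that $i$ being a morphism forces $\hat R|_V=R_V$, and that centrality gives $[V,\hat L]_{\hat L}=0$. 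Since the representation is trivial, $\rho=\rho_R=0$, so $\delta_{CE}$ and $\delta_R$ reduce to their bracket terms only.

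First I would check that $(\psi,\xi)$ is itself a $2$-cocycle in $\bigl(C_R^\bullet(L;V),D_R\bigr)$, i.e. $D_R(\psi,\xi)=(\delta_{CE}\psi,\,-\delta_R\xi-\varphi\psi)=(0,0)$. Expanding the Jacobi identity of $\hat{\mathbb L}$ on $s(a),s(b),s(c)$, writing $[s(a),s(b)]_{\hat L}=s[a,b]+\psi(a,b)$ and discarding the outer brackets with a $\psi$-entry by centrality, the Jacobi identity of $L$ cancels the $s(\cdot)$-terms and leaves exactly the cyclic relation $\psi([a,b],c)+\psi([b,c],a)+\psi([c,a],b)=0$, which is $\delta_{CE}\psi=0$. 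Similarly, expanding the Reynolds identity $[\hat R s(a),\hat R s(b)]_{\hat L}=\hat R\bigl([\hat R s(a),s(b)]_{\hat L}+[s(a),\hat R s(b)]_{\hat L}-[\hat R s(a),\hat R s(b)]_{\hat L}\bigr)$, substituting $\hat R s(a)=s(Ra)+\xi(a)$, dropping $[\xi(\cdot),-]_{\hat L}$ by centrality, applying $\hat R|_V=R_V$ to the surviving $\psi$-terms, and using $R([x,y]_R)=[Rx,Ry]$ to cancel the $s(\cdot)$-terms, one is left precisely with $\varphi\psi(a,b)+\delta_R\xi(a,b)=0$.

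Next I would match the obstruction cochain to $\Delta(\psi,\xi)$. Unwinding the definition of $\Delta$ for the derivations $d$ on $L$ and $d_V$ on $V$ gives $\Delta\psi(a,b)=\psi(da,b)+\psi(a,db)-d_V\psi(a,b)$ and $\Delta\xi(a)=\xi(da)-d_V\xi(a)$, so comparison with \eqref{Ob1} yields $\hat{Ob}_{(d_V,d)}=-\Delta(\psi,\xi)$. Invoking $D_R\circ\Delta=\Delta\circ D_R$ from \eqref{coboundary 3}, I conclude
\[
D_R\,\hat{Ob}_{(d_V,d)}=-D_R\Delta(\psi,\xi)=-\Delta D_R(\psi,\xi)=-\Delta(0,0)=(0,0),
\]
so $\hat{Ob}_{(d_V,d)}$ is a $2$-cocycle. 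Here the commutation $R\circ d=d\circ R$ on $L$ (part of the hypothesis that $(\mathbb L,R,d)$ is a Reynolds LieDer pair) is what secures \eqref{coboundary 3} in this setting, while the $d_V$-compatibility needed for $\Delta$ to be a cochain map is automatic because the representation is trivial.

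The main obstacle is the first step: translating the two structural axioms of $\hat{\mathbb L}$ into the two components of $D_R(\psi,\xi)=0$. The Reynolds computation is the delicate part, since one must track exactly which bracket terms survive centrality and then recognize the remaining combination of $\psi$- and $\xi$-terms, term by term, as $\varphi\psi+\delta_R\xi$ against the explicit formulas for $\varphi$ and $\delta_R$. Once these two identities are secured, everything else is the formal manipulation displayed above.
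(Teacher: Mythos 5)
Your proposal is correct and takes a genuinely different route from the paper's proof. The paper argues componentwise and by hand: it cites \cite{R0} to get $\delta_{CE}\mathrm{Ob}^{2,\hat{\mathcal{L}}}_{(d_V,d)}=0$ and then checks the remaining component by a short direct computation, which it bases on the asserted identities $\psi([a,b],c)+c.p.=0$ and $\xi(R(a))=R_V(\xi(a))$. You instead factor the obstruction as $\hat{\mathrm{Ob}}_{(d_V,d)}=-\Delta(\psi,\xi)$, prove that $(\psi,\xi)$ is a $D_R$-cocycle directly from the Jacobi and Reynolds axioms of $\hat{\mathbb{L}}$, and conclude via the cochain-map property $D_R\circ\Delta=\Delta\circ D_R$ of \eqref{coboundary 3}. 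This buys more than elegance. The identity you extract from the Reynolds axiom, $\varphi\psi(a,b)+\delta_R\xi(a,b)=0$ (with $\delta_R\xi(a,b)=-\xi([a,b]_R)$ in the trivial representation), is the one that actually holds; the paper's identity $\xi(R(a))=R_V(\xi(a))$ does \emph{not} follow from the central-extension axioms. Indeed, take $\hat{L}$ two-dimensional abelian with basis $e_1,e_2$, $V=\mathrm{span}(e_2)$, $\hat{R}(e_1)=\hat{R}(e_2)=e_2$, $s(\bar{e}_1)=e_1$: then $\xi(R\bar{e}_1)=0$ while $R_V\xi(\bar{e}_1)=e_2$. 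Moreover, the paper's displayed formula $D_R\hat{\mathrm{Ob}}_{(d_V,d)}=\bigl(\delta_{CE}\mathrm{Ob}^{2,\hat{\mathcal{L}}}_{(d_V,d)},-\varphi\mathrm{Ob}^{1,\hat{\mathcal{L}}}_{(d_V,d)}\bigr)$ is type-inconsistent with \eqref{coboundary map of Rey Lie algera}: the second component should be $-\delta_R\mathrm{Ob}^{1,\hat{\mathcal{L}}}_{(d_V,d)}-\varphi\mathrm{Ob}^{2,\hat{\mathcal{L}}}_{(d_V,d)}$. Your functorial argument computes the correct differential and so actually repairs the paper's proof rather than merely paralleling it.

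One caveat: your parenthetical claim that the $d_V$-compatibility needed for $\Delta$ to be a cochain map is automatic because the representation is trivial is not accurate. Triviality of $\rho$ does kill the LieDer representation condition, but the commutation $R_V\circ d_V=d_V\circ R_V$ is still genuinely needed for $\varphi\circ\Delta=\Delta\circ\varphi$, hence for \eqref{coboundary 3}: without it, the second component of $D_R\hat{\mathrm{Ob}}_{(d_V,d)}$ works out to $(R_Vd_V-d_VR_V)\bigl(\psi(Ra,b)+\psi(a,Rb)-\psi(Ra,Rb)\bigr)$, which need not vanish. So your argument requires both $R\circ d=d\circ R$ (which you flag) and $R_V\circ d_V=d_V\circ R_V$ (which you dismiss). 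This is the same standing hypothesis the paper uses silently in its own computation, and it is harmless if $\mathrm{Der}(L)$ and $\mathrm{Der}(V)$ are read as derivations compatible with the Reynolds operators, but it should be stated as a hypothesis rather than claimed to be automatic.
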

\begin{proof}
	Let $\mathrm{s:L\rightarrow \hat{L}}$ be a section of the central extension of Reynolds Lie algebra $$\begin{CD}
		0@>>> \mathrm{(\mathcal{V},R_V)} @>\mathrm{i} >> \mathrm{(\hat{\mathbb{L}},\hat{R})} @>\mathrm{p} >> \mathrm{(\mathbb{L},R)} @>>>\mathrm{0}
	\end{CD}$$  the linear maps $\mathrm{\psi:\wedge^2L\rightarrow V}$ and $\mathrm{\xi:L\rightarrow V}$ forms a 2-cocycle of the Reynolds Lie algebra $(\mathbb{L},\mathrm{R})$ with coefficients in the trivial representation, i.e
	\begin{eqnarray*}
		&&\mathrm{\psi([a,b],c)+c.p=0},\\
		&&\mathrm{\xi(R(a))=R_V(\xi(a))}.
	\end{eqnarray*}
	Then,
	\begin{equation*}
		\mathrm{(D_R\hat{Ob}_{(d_V,d)})=\Big(\delta_{CE}Ob^{2,\hat{\mathcal{L}}}_{(d_V,d)},-\varphi Ob^{1,\hat{\mathcal{L}}}_{(d_V,d)}\Big)}
	\end{equation*}
	
	We already have, from \cite{R0},   $\mathrm{(\delta_{CE}Ob^{2,\hat{\mathcal{L}}}_{(d_V,d)})(a,b)=0}$, and we have
	\begin{align*}
		\mathrm{(-\varphi Ob^{1,\hat{\mathcal{L}}}_{(d_V,d)})(a)}&=\mathrm{-Ob^{1,\hat{\mathcal{L}}}_{(d_V,d)}(R(a))+R_V(Ob^{1,\hat{\mathcal{L}}}_{(d_V,d)})(a)}\\
		&=\mathrm{-d_V(\xi(R(a)))+\xi(d(R(a)))+R_V(d_V(\xi(a))-\xi(d(a)))}\\
		&=-\mathrm{d_V(R_V(\xi(a)))+\xi(d(R(a)))+R_V(d_V(\xi(a)))-R_V\xi(d(a))}\\
		&=0,
	\end{align*}
	which implies that $\mathrm{D_R\hat{Ob}_{(d_V,d)}}$ is a 2-cocycle of the Reynolds Lie algebra $(\mathrm{\mathbb{L},R})$ with coefficients in the trivial representation.
\end{proof}
\begin{thm}
	Let ~~$\begin{CD}
		0@>>> \mathrm{(\mathcal{V},R_V)} @>\mathrm{i} >> \mathrm{(\hat{\mathbb{L}},\hat{R})} @>\mathrm{p} >> \mathrm{(\mathbb{L},R)} @>>>\mathrm{0}
	\end{CD}$ ~~ be a central extension of Reynolds Lie algebras. The a couple $\mathrm{(d_V,d)\in Der(V)\times Der(L)}$ is exensible if and only if the obstruction class $[ \mathrm{\hat{Ob}_{(d_V,d)}}]\in \mathcal{H}^2_R(L;V)$ is trivial.
\end{thm}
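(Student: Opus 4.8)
The plan is to fix the section $s$, use the induced vector-space splitting $\hat L=s(L)\oplus V$, and exploit centrality $[V,\hat L]_{\hat L}=0$, which forces $[s(a)+u,s(b)+v]_{\hat L}=s[a,b]+\psi(a,b)$ to depend only on the $L$-components. I would prove both implications by setting up a correspondence between extending derivations $\hat d$ and $1$-cochains $\lambda\in\mathrm{Hom}(L,V)$ trivializing $\hat{Ob}_{(d_V,d)}$. The starting observation is that unwinding $D_R$ (see \eqref{coboundary map of Rey Lie algera}) in the trivial representation gives $D_R(\lambda,v)=\big(-\lambda\circ[-,-],\;R_V\lambda(-)-\lambda(R-)\big)$, independently of $v$, since $\delta_R v=0$ and $(\varphi\lambda)(a)=\lambda(Ra)-R_V\lambda(a)$ for a $1$-cochain. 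Thus $[\hat{Ob}_{(d_V,d)}]=0$ precisely when there is $\lambda$ with $Ob^{2,\hat{\mathcal L}}_{(d_V,d)}(a,b)=-\lambda([a,b])$ and $Ob^{1,\hat{\mathcal L}}_{(d_V,d)}(a)=R_V\lambda(a)-\lambda(Ra)$.

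For the direction \emph{extensible $\Rightarrow$ trivial class}, I would assume $\hat d\in\mathrm{Der}(\hat L)$ extends $(d_V,d)$. Since $p(\hat d(s(a))-s(d(a)))=0$, the assignment $\lambda(a):=\hat d(s(a))-s(d(a))$ takes values in $V$. I would then compute $\hat d(\psi(a,b))$ in two ways: because $\psi(a,b)\in V$ and $\hat d|_V=d_V$ it equals $d_V(\psi(a,b))$, while applying the derivation rule to $\psi(a,b)=[s(a),s(b)]_{\hat L}-s[a,b]$ (with the $\lambda$-terms inside $[\hat d(s(a)),s(b)]_{\hat L}$ killed by centrality) gives $\psi(d(a),b)+\psi(a,d(b))-\lambda([a,b])$. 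Equating yields $Ob^{2,\hat{\mathcal L}}_{(d_V,d)}(a,b)=-\lambda([a,b])$. The same bookkeeping applied to $\xi(a)=\hat R(s(a))-s(R(a))$, now using $\hat d\hat R=\hat R\hat d$ on $\hat L$, $\hat R|_V=R_V$, and $Rd=dR$ on $L$, gives $Ob^{1,\hat{\mathcal L}}_{(d_V,d)}(a)=R_V\lambda(a)-\lambda(Ra)$. Hence $\hat{Ob}_{(d_V,d)}=D_R(\lambda,0)$ is a coboundary and its class vanishes.

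For the converse I would write $\hat{Ob}_{(d_V,d)}=D_R(\lambda,v)$, so the two equations above hold for this $\lambda$, and define $\hat d(s(a)+u):=s(d(a))+\lambda(a)+d_V(u)$. By construction $\hat d|_V=d_V$ and $p\hat d=dp$, so $i$ and $p$ become Reynolds LieDer morphisms once $\hat d$ is a derivation commuting with $\hat R$. Using $[s(a)+u,s(b)+v]_{\hat L}=s[a,b]+\psi(a,b)$ and $d[a,b]=[d(a),b]+[a,d(b)]$, the derivation identity for $\hat d$ collapses (the $s(d[a,b])$ terms cancelling) to exactly $d_V\psi(a,b)-\psi(d(a),b)-\psi(a,d(b))=-\lambda([a,b])$. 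Writing $\hat R(s(a)+u)=s(R(a))+\xi(a)+R_V(u)$, the identity $\hat R\hat d=\hat d\hat R$ collapses — after the $s$-terms cancel by $Rd=dR$ and the $V$-terms cancel by $R_Vd_V=d_VR_V$ (condition \eqref{rep ReyLieDer pair 2}) — to exactly $d_V\xi(a)-\xi(d(a))=R_V\lambda(a)-\lambda(Ra)$. Both hold by hypothesis, so $\hat d$ extends $(d_V,d)$.

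I expect the only genuine obstacle to be organizational rather than conceptual: applying centrality at the correct moments so that the $V$-components of bracket arguments are discarded and the $\lambda$-contributions drop out. It is precisely $[V,\hat L]_{\hat L}=0$ that converts the derivation and Reynolds-compatibility conditions on $\hat d$ into the statement that the cocycle $\hat{Ob}_{(d_V,d)}$ is a $1$-coboundary, while the ambient identities $Rd=dR$ and $R_Vd_V=d_VR_V$ are what make the $\hat R$-compatibility reduce cleanly; keeping these three facts in the right places is essentially the whole content of the verification.
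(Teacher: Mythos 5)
Your proposal is correct and follows essentially the same route as the paper: in both directions the extending derivation $\hat d$ and the trivializing $1$-cochain are related by $\lambda(a)=\hat d(s(a))-s(d(a))$ (the paper's $\gamma$), and the derivation and Reynolds-compatibility conditions on $\hat d$ are shown to be equivalent to $\mathrm{\hat{Ob}_{(d_V,d)}}=D_R\lambda$, exactly as in the paper's proof. The only difference is cosmetic: you verify the identity $\mathrm{Ob^{2,\hat{\mathcal L}}_{(d_V,d)}}=\delta_{CE}\lambda$ by a direct two-way computation of $\hat d(\psi(a,b))$, where the paper outsources that step to Theorem 6.3 of \cite{R0}, and you make explicit the commutation hypotheses $Rd=dR$, $R_Vd_V=d_VR_V$ that the paper uses implicitly.
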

\begin{proof}
	Let $\mathrm{s:L\rightarrow \hat{L}}$ be a section of the central extension of Reynolds Lie algebras $$\begin{CD}
		0@>>> \mathrm{(\mathcal{V},R_V)} @>\mathrm{i} >> \mathrm{(\hat{\mathbb{L}},\hat{R})} @>\mathrm{p} >> \mathrm{(\mathbb{L},R)} @>>>\mathrm{0}
	\end{CD}$$
	Suppose that $\mathrm{(d_V,d)}$ is extensible which implies the existence of a derivation $\mathrm{\hat{d}\in Der(\hat{L})}$ such that we have the exact sequence of Reynolds LieDer pair morphisms. Using $\mathrm{d\circ p=p\circ \hat{d}}$, we have $\mathrm{\hat{d}(s(a))-s(d(a))\in V}$.\\
	Define $\mathrm{\gamma=\hat{d}(s(a))-s(d(a))}$ then 
	\begin{align*}
		\mathrm{\hat{d}(s(a)+u)}&=\mathrm{\hat{d}(s(a))+d_V(u)}\\
		&=\mathrm{\hat{d}(s(a))-s(d(a))+s(d(a))+d_V(u)}\\
		&=\mathrm{s(d(a))+\gamma(a)+d_V(u)},
	\end{align*}
	similarly, we have $\mathrm{\hat{R}(s(a)+u)=s(R(a))+\gamma(a)+R_V(u)}$.\\
	For $\mathrm{s(a)+u}$ and $\mathrm{s(b)+v}$ two elements of $\hat{\mathbb{L}}$, in (\cite{R0}, Theorem 6.3) authors have shown that 
	\begin{equation*}
		\mathrm{d_V(\psi(a,b))-\psi(d(a),b)-\psi(a,d(b))=-\gamma([a,b])}
	\end{equation*}
	which implies that 
	\begin{equation}\label{Ob2}
		Ob^{2,\hat{\mathcal{L}}}_{(d_V,d)}=\delta_{CE}\gamma
	\end{equation}
	On the other hand we have 
	\begin{align*}
		\mathrm{\hat{R}\circ  \hat{d}(s(a)+u)}&=\mathrm{\hat{R}(s(d(a))+\gamma(d(a))+R_V(\gamma(a))+d_V(u))}\\
		&=\mathrm{s(R(d(a)))+\gamma(d(a))+R_V(\gamma(a)+d_V(u))},
	\end{align*}
	and 
	\begin{align*}
		\mathrm{\hat{d}\circ  \hat{R}(s(a)+u)}&=\mathrm{\hat{d}(\hat{R}(s(a))+R_V(u))}\\
		&=\mathrm{s(d(R(a)))+\gamma(R(a))+d_V(\gamma(a))+R_V(u)},
	\end{align*}
	which means 
	\begin{equation*}
		\mathrm{\gamma(d(a))-d_V(\gamma(a))=\gamma(R(a))-R_V(\gamma(a))},
	\end{equation*}
	\begin{equation}\label{Ob3}
		\mathrm{Ob^{1,\hat{\mathcal{L}}}_{(d_V,d)}=-\varphi\gamma},
	\end{equation}
	with equations \eqref{Ob2} and \eqref{Ob3} we have 
	\begin{equation*}
		\mathrm{\hat{Ob}_{(d_V,d)}=D_R\gamma}.
	\end{equation*}
	Therefore the obstruction class is trivial.\\
	Conversely, suppose that the obstruction class is trivial,then there exists a linear map $\mathrm{\gamma:L\rightarrow V}$ such that 
	\begin{equation*}
		\mathrm{\hat{Ob}_{(d_V,d)}=D_R\gamma}.
	\end{equation*}
	Let $\mathrm{s(a)+u\in \hat{L}}$, define $\mathrm{\hat{d}}$ by 
	\begin{equation*}
		\mathrm{\hat{d}(s(a)+u)=s(d(a))+\gamma(a)+d_V(u)}
	\end{equation*}
	such that 
	\begin{align*}
		&&\mathrm{d_V\psi(a,b)-\psi(d(a),b)-\psi(a,d(b))=-\gamma([a,b])},\\
		&&\mathrm{\gamma(d(a))-d_V(\gamma(a))=-R_V(\gamma(a))+\gamma(R(a))}.
	\end{align*}
	We have the exact sequence of Reynolds LieDer pair. This complete the proof.
\end{proof}
\begin{cor}
	Let ~~$\begin{CD}
		0@>>> \mathrm{(\mathcal{V},R_V)} @>\mathrm{i} >> \mathrm{(\hat{\mathbb{L}},\hat{R})} @>\mathrm{p} >> \mathrm{(\mathbb{L},R)} @>>>\mathrm{0}
	\end{CD}$~~ be a central extension of Reynolds Lie algebras. If $\mathrm{\mathcal{H}^2_R(L;V)=0}$, then any couple $\mathrm{(d_V,d)\in Der(V)\times Der(L)}$ is extensible.
\end{cor}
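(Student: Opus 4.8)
The plan is to deduce this directly from the preceding theorem, which establishes that a couple $\mathrm{(d_V,d)\in Der(V)\times Der(L)}$ is extensible if and only if its obstruction class $\mathrm{[\hat{Ob}_{(d_V,d)}]\in \mathcal{H}^2_R(L;V)}$ is trivial. The corollary should therefore require no new computation beyond invoking this equivalence under the hypothesis $\mathrm{\mathcal{H}^2_R(L;V)=0}$.

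First I would fix an arbitrary couple $\mathrm{(d_V,d)\in Der(V)\times Der(L)}$ and recall, from the preceding proposition, that the associated $\mathrm{2}$-cochain $\mathrm{\hat{Ob}_{(d_V,d)}\in C^2_R(L;V)}$ is in fact a $\mathrm{2}$-cocycle of the Reynolds Lie algebra $\mathrm{(\mathbb{L},R)}$ with coefficients in the trivial representation. Consequently it represents a well-defined cohomology class $\mathrm{[\hat{Ob}_{(d_V,d)}]\in \mathcal{H}^2_R(L;V)}$, so that the hypothesis can be applied to it.

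Next, since by assumption the entire second cohomology group vanishes, $\mathrm{\mathcal{H}^2_R(L;V)=0}$, every class in it is trivial; in particular $\mathrm{[\hat{Ob}_{(d_V,d)}]=0}$. Applying the preceding theorem, the triviality of the obstruction class is equivalent to the extensibility of $\mathrm{(d_V,d)}$, whence the couple is extensible. As the couple was arbitrary, every couple in $\mathrm{Der(V)\times Der(L)}$ is extensible.

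I do not expect any genuine obstacle here: the whole content is already carried by the preceding proposition, which guarantees that the obstruction cochain is a cocycle, and by the preceding theorem, which converts its vanishing into extensibility. The only point worth keeping in mind is that the vanishing hypothesis concerns $\mathrm{\mathcal{H}^2_R(L;V)}$, the cohomology of the underlying Reynolds Lie algebra with trivial coefficients, which is exactly the group in which the obstruction class lives; hence no change-of-coefficients or compatibility argument is needed, and the deduction is immediate.
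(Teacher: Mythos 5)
Your proposal is correct and matches the paper's own treatment: the corollary is stated without proof precisely because it is an immediate consequence of the preceding theorem (extensibility $\Leftrightarrow$ triviality of $\mathrm{[\hat{Ob}_{(d_V,d)}]}$) combined with the proposition guaranteeing that $\mathrm{\hat{Ob}_{(d_V,d)}}$ is a $2$-cocycle, so its class lives in $\mathrm{\mathcal{H}^2_R(L;V)}$ and vanishes under the hypothesis. Nothing further is needed.
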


\noindent {\bf Declaration of competing interest:}
No conflict of interest exits in the submission of this manuscript.\\
\noindent {\bf Data availability:}
No data was used for the research described in the article.

		\noindent {\bf Acknowledgment:}
		Project was not supported by any organization.
		The authors would like to thank the referee for valuable comments and suggestions on this article. 
		%%%%%%%%%%%%%%%%%%%%%%%%%%%%%%%
		%%%%%%%%%%%%%%%%%%%%%

	\end{document}